\newtheoremstyle{myremark} 
    {\topsep}                    
    {\topsep}                    
    {}                           
    {}                           
    {\bf}                        
    {.}                          
    {.5em}                       
    {}  
\theoremstyle{plain}
\newtheorem{lemma}{Lemma}[section]
\newtheorem{theorem}[lemma]{Theorem}
\newtheorem{fact}[lemma]{Fact}
\newtheorem{definition}[lemma]{Definition}
\newtheorem{corollary}[lemma]{Corollary}
\newtheorem{proposition}[lemma]{Proposition}
\theoremstyle{myremark}
\newtheorem{remark}[lemma]{Remark}
\newtheorem{example}[lemma]{Example}
\newcommand{\zet}{\mathbb{Z}}
\newcommand{\niceu}{\mathcal{U}}
\newcommand{\dist}{\mathrm{dist}}
\newcommand{\bary}{\mathrm{bd}}
\newcommand{\disunion}{\sqcup}
\newcommand{\cl}{\mathrm{Cl}}
\newcommand{\ind}{I}
\newcommand{\compl}[1]{\overline{#1}}
\newcommand{\htpyequiv}{\simeq}
\newcommand{\homeo}{\equiv}
\newcommand{\susp}{\Sigma}
\newcommand{\simpjoin}{\ast}
\newcommand{\incl}{\hookrightarrow}
\newcommand{\x}[2]{\bigvee^{#1}S^{#2}}
\newcommand{\neib}[1]{N(#1)}
\newcommand{\st}{\mathrm{st}}
\newcommand{\lk}{\mathrm{lk}}
\newcommand{\te}{\mathrm{t}}
\newcommand{\supress}[1]{}
\renewcommand{\subset}{\subseteq}
\renewcommand{\supset}{\supseteq}
\newcommand\xx[1]{\cellcolor[gray]{0.82} {#1}}
\newcommand\nnn[2]{\tikz \node (#2) {#1};}
\tikzstyle{every picture}+=[remember picture,baseline]
\tikzstyle{every node}+=[inner sep=0pt,anchor=base,text depth=.25ex,outer sep=1.5pt]
\tikzstyle{every path}+=[dashed]
\begin{document}

\title{Clique complexes and graph powers}

\author[Micha{\l} Adamaszek]{Micha{\l} Adamaszek}
\address{Mathematics Institute and DIMAP,
      \newline University of Warwick, Coventry, CV4 7AL, UK}
\email{aszek@mimuw.edu.pl}
\keywords{Clique complex, homotopy equivalence, graph powers, cycle, circular complete graph, independence complex}
\subjclass[2010]{05C69, 05E45, 57M15}
\thanks{Research supported by the Centre for Discrete
        Mathematics and its Applications (DIMAP), EPSRC award EP/D063191/1.}
\begin{abstract}
We study the behaviour of clique complexes of graphs under the operation of taking graph powers. As an example we compute the clique complexes of powers of cycles, or, in other words, the independence complexes of circular complete graphs.
\end{abstract}
\maketitle

\section{Introduction}

There are many constructions that associate topological spaces to graphs and a lot of work has gone into studying how they reflect the underlying graph theory. In this paper we look at clique complexes and their interaction with powers of graphs.

All our graphs are simple, finite and undirected. If $G$ is a graph and $r$ is a non-negative integer then the \emph{$r$-th power} or \emph{$r$-th distance power} of $G$, denoted $G^r$, is a new graph with the same vertex set in which two vertices are adjacent if and only if their distance in $G$ is \emph{at most} $r$. Any graph $G$ gives rise to a sequence of graph inclusions
\begin{equation}
G\incl G^2\incl G^3\incl\cdots
\end{equation}
which eventually stabilizes (at the complete graph if $G$ is connected).

For any graph $G$ the \emph{clique complex} $\cl(G)$ is a simplicial complex whose vertices are the vertices of $G$ and the simplices are the \emph{cliques} (complete subgraphs) in $G$. Clearly $\cl$ is a functor from graphs to simplicial complexes and we have inclusions
\begin{equation}
\label{seqinj}
\cl(G)\incl \cl(G^2)\incl \cl(G^3)\incl\cdots
\end{equation}
which, for a connected graph $G$, stabilize at the full simplex. In a geometer's language $\cl(G^r)$ is precisely the Vietoris-Rips complex whose faces are subsets of diameter at most $r$ in the discrete metric space $V(G)$ with the shortest path distance.

Note that not every graph is of the form $G^r$ for $r\geq 2$ (in fact already the recognition of graph squares is NP-hard, \cite{Mot}), so we may ask about interesting properties of the spaces $\cl(G^r)$ and of the inclusions $\cl(G^r)\hookrightarrow\cl(G^{r+1})$.

For example, if $G=C_7$ is the $7$-cycle then $\cl(C_7^2)$ has maximal faces of the form $\{i,i+1,i+2\}\pmod{7}$. It is homeomorphic to the M\"obius strip and it collapses to its subcomplex $\cl(C_7)\homeo S^1$. If, on the other hand, $G=C_6$, then the complex $\cl(C_6^2)$ is the boundary of the octahedron,  homeomorphic to $S^2$, and the sequence (\ref{seqinj}) is, up to homotopy, $S^1\to S^2\to \ast\to\cdots$.

Let us outline the structure of the paper. Section \ref{sec:largegirth} contains some preliminary results, in particular on powers of graphs with no short cycles. In Section \ref{sec:nobadcliques} we restrict to graph squares ($r=2$) and prove topological and combinatorial conditions which guarantee that the inclusion $\cl(G)\incl\cl(G^2)$ is a homotopy equivalence. 

In Section \ref{sec:realization} we discuss \emph{universality} of $\cl(G^r)$, proving that for any $r$ every finite complex can be realized as $\cl(G^r)$ up to homotopy. Contrary to the case $r=1$, for higher $r$ not every space has a realization as $\cl(G^r)$ up to homeomorphism. Our method is based on some results of \cite{Doch} and the analysis of shortest paths in iterated barycentric subdivisions.  

Section \ref{sect:line} provides a complete description of the clique complexes of the total graph and the line graph of $G$. 

In the last part, Section \ref{sec:cyclepowers}, we calculate the homotopy types of $\cl(G^r)$ in the first nontrivial case, that is for the cycles $G=C_n$. A quick preview of those can be found in Section \ref{sect:tableofcyc}. They turn out to be obtained from a small number of initial cases by an action of a double suspension operator $\susp^2$. To see this we run the theory of star clusters of \cite{Bar} on the independence complexes of the complements $\compl{C_n^r}$, the circular complete graphs.

\subsection*{Acknowledgement.} The referee's insightful remarks were very helpful in improving the quality of the paper.

\subsection*{Notation.} We follow standard notation related to graphs and (combinatorial) algebraic topology. Let us just fix a few conventions.

\subsection*{Graphs.} We write $V(G)$ and $E(G)$, respectively, for the set of vertices and edges of an undirected, simple graph $G$. Given a vertex $v$ of $G$ we define the neighbourhood $N_G(v)=\{w: vw\in E(G)\}$ and the closed neighbourhood $N_G[v]=N_G(v)\cup\{v\}$. We write $\dist_G(u,v)$ for the length of the shortest path in $G$ from $u$ to $v$. A graph is called a \emph{cone} if there is a vertex $v$ adjacent to every other vertex, i.e. $N_G[v]=V(G)$. By convention the $0$-th power $G^0$ is the graph with vertex set $V(G)$ and no edges.

The \emph{girth} of a graph is the length of its shortest cycle or $\infty$ for a forest. The symbol $\compl{G}$ denotes the complement of $G$ and $\disunion$ is the disjoint union of graphs. If $W\subset V(G)$ then $G[W]$ denotes the subgraph of $G$ induced by $W$. The symbols $K_n$, $C_n$, $P_n$ denote, respectively, the complete graph, cycle and path with $n$ vertices.

\subsection*{Simplicial topology.}
If $G$ is a graph then $\ind(G)$ and $\cl(G)$ denote, respectively, the \emph{independence complex} and the \emph{clique complex} of $G$. They both have $V(G)$ as vertex set and the faces are, respectively, the independent sets or the cliques in $G$.  Clearly $\ind(G)=\cl(\compl{G})$. 

If $K$, $L$ are simplicial complexes with disjoint vertex sets then the \emph{join} $K\simpjoin L$ is the simplicial complex whose faces are all the unions $\sigma\cup\tau$ for $\sigma\in K$ and $\tau\in L$. The \emph{cone} $CK$ is the join of $K$ with one point (the apex) and the \emph{unreduced suspension} is $\susp K=S^0\simpjoin K$. The symbol $\sqcup$ is the disjoint union. By $K^{(n)}$ we denote the $n$-dimensional skeleton of $K$. Every graph can be treated as a $1$-dimensional simplicial complex.

We also have induced subcomplexes. If $W$ is a subset of the vertices of $K$ then $K[W]$ denotes the subcomplex induced by $W$, i.e. the simplicial complex with vertices $W$ whose faces are all the faces of $K$ contained in $W$. We have the isomorphisms
\begin{center}
\begin{tabular}{ll}
$\cl(G\disunion H)=\cl(G)\disunion\cl(H)$ & $\ind(G\disunion H)=\ind(G)\simpjoin\ind(H)$\\
$\cl(G[W])=\cl(G)[W]$                     & $\ind(G[W])=\ind(G)[W]$.
\end{tabular}
\end{center}
We write $\bigvee^kX$ for a wedge of $k$ copies of a topological space $X$. The symbol $\homeo$ means homeomorphism and $\htpyequiv$ stands for homotopy equivalence. We do not distinguish between a simplicial complex and its geometric realization. 

We are going to use the following elementary language of discrete Morse theory to describe collapsing sequences (see \cite[Chapter 11]{Koz}, \cite{For1,For2}).
\begin{definition}
\label{acyclic}
An \emph{acyclic matching} in a simplicial complex $K$ is a set $M\subset K\times K$ of pairs of faces such that 
\begin{itemize}
\item if $(\sigma,\tau)\in M$ then $\sigma$ is a codimension $1$ face of $\tau$,
\item every face $\sigma$ belongs to at most one element of $M$,
\item there is no cycle
$$\sigma_0,\tau_0,\sigma_1,\tau_1,\sigma_2,\ldots,\sigma_n,\tau_n,\sigma_0$$
such that $(\sigma_i,\tau_i)\in M$, $\sigma_{i+1}$ is a codimension $1$ face of $\tau_i$ (where $\sigma_{n+1}=\sigma_0$), all $\sigma_i$ are distinct and $n\geq 1$.	
\end{itemize}
The faces of $K$ which do not belong to any element of $M$ are called \emph{critical}.
\end{definition}
\begin{fact}
If $K$ is a simplicial complex with an acyclic matching whose set of critical faces is a non-empty simplicial subcomplex $L$ then $K$ simplicially collapses to $L$.
\end{fact}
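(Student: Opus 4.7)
The plan is induction on $|M|$. When $|M|=0$, all faces are critical, so $L=K$ and there is nothing to prove. For the induction step it suffices to produce one pair $(\sigma,\tau)\in M$ such that $\sigma$ is a \emph{free face} of $\tau$ in $K$, i.e.\ $\tau$ is the only face of $K$ properly containing $\sigma$. Then $K$ admits the elementary simplicial collapse removing $\sigma$ and $\tau$; the restricted matching $M\setminus\{(\sigma,\tau)\}$ on $K\setminus\{\sigma,\tau\}$ is still acyclic with the same critical set $L$, and the inductive hypothesis finishes the argument.

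To find a free pair, set $d=\max\{\dim\tau:(\sigma,\tau)\in M\}$ and enumerate the matched pairs of top dimension, $(\sigma_1,\tau_1),\ldots,(\sigma_p,\tau_p)$. Each such $\tau_i$ is maximal in $K$: a strictly larger coface would be either critical --- forcing $\tau_i\in L$ by the subcomplex property of $L$, which contradicts that $\tau_i$ is matched --- or matched, yielding a matched top of dimension exceeding $d$. The same argument shows that every codim-$1$ coface of any $\sigma_i$ must be matched \emph{as a top} and of dimension exactly $d$, hence appears among $\tau_1,\ldots,\tau_p$. Form the bipartite graph $B$ with vertex classes $\{\sigma_1,\ldots,\sigma_p\}$ and $\{\tau_1,\ldots,\tau_p\}$ and an edge $\sigma_i\sim\tau_j$ whenever $\sigma_i$ is a codim-$1$ face of $\tau_j$ in $K$. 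A vertex $\sigma_i$ of degree $1$ in $B$ has only $\tau_i$ as a codim-$1$ coface and so, together with maximality of $\tau_i$, gives precisely a free matched pair. If no such $\sigma_i$ exists, each has a neighbour $\tau_{f(i)}$ with $f(i)\neq i$; the functional digraph of $f$ on $\{1,\ldots,p\}$ must contain a directed cycle $i_0,i_1,\ldots,i_{k-1},i_k=i_0$ of length $k\geq 2$, and reading this cycle in reverse yields the sequence $(\sigma_{i_0},\tau_{i_0}),(\sigma_{i_{k-1}},\tau_{i_{k-1}}),\ldots,(\sigma_{i_1},\tau_{i_1}),(\sigma_{i_0},\tau_{i_0})$ of exactly the form forbidden by the third bullet of Definition~\ref{acyclic}, contradicting the acyclicity of $M$.

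The main obstacle is this last bipartite/cycle-extraction step, in particular the careful bookkeeping required to match the cycle extracted from the functional digraph to the exact pattern forbidden by Definition~\ref{acyclic}; the rest of the argument is routine, provided one uses crucially that $L$ is closed under taking faces. The Fact itself is a direct specialization of Forman's discrete Morse theorem \cite[Chapter~11]{Koz}, \cite{For1,For2}: when the critical cells happen to form a subcomplex, the Morse-theoretic CW-attachings trivialise and one obtains an honest simplicial collapse, so in the paper I would simply point the reader to those references.
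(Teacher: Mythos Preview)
Your argument is correct. The induction on $|M|$, the extraction of a free pair at the top matched dimension, and the cycle argument via the functional digraph are all sound; in particular, your use of the hypothesis that $L$ is a subcomplex is exactly what forces every coface of a top-dimensional matched $\sigma_i$ to be one of the $\tau_j$, and the reversed cycle does match the pattern forbidden by Definition~\ref{acyclic}.

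As for comparison with the paper: there is nothing to compare, because the paper does not prove this Fact at all---it is stated without proof, with a pointer to \cite[Chapter~11]{Koz} and \cite{For1,For2}, precisely as you anticipate in your final paragraph. Your write-up is a clean, self-contained version of the standard argument behind that citation.
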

For other standard notions of combinatorial topology see \cite{Koz,Bjorner}.

\section{Preliminaries}
\label{sec:largegirth}

\begin{fact}
\label{firstfact}
For any connected graph $G$ the map of fundamental groups 
$$\pi_1(\cl(G))\to\pi_1(\cl(G^r))$$
induced by the inclusion $G\hookrightarrow G^r$ is \emph{surjective}.
\end{fact}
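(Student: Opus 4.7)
The plan is to exploit the fact that $\pi_1$ of a simplicial complex depends only on its $2$-skeleton, and that the $1$-skeleton of $\cl(H)$ is the graph $H$ itself. So I would argue directly at the level of edge loops.

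First, fix a basepoint $v_0 \in V(G) = V(G^r)$. Every element of $\pi_1(\cl(G^r), v_0)$ is represented by an edge-loop in the graph $G^r$, i.e.\ a closed walk $v_0, u_1, u_2, \ldots, u_m, v_0$ along edges of $G^r$. It suffices to show that each individual edge of $G^r$ is, inside $\cl(G^r)$, homotopic rel endpoints to a walk in the subgraph $G$; applying this edge-by-edge then replaces the loop by one that lies entirely in $G \subset \cl(G)$, proving surjectivity.

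The key observation is this. Let $uv \in E(G^r)$, so $\dist_G(u,v)=k\le r$. Choose a shortest $G$-path $u=w_0,w_1,\ldots,w_k=v$. For any $0\le i<j\le k$ one has $\dist_G(w_i,w_j)=j-i\le k\le r$, so $\{w_0,\ldots,w_k\}$ is a clique in $G^r$ and hence a simplex of $\cl(G^r)$. Since this simplex is contractible, the edge $uv$ and the polygonal path $w_0 w_1 \cdots w_k$ (which both lie in its boundary with the same endpoints) are homotopic inside $\cl(G^r)$ rel $\{u,v\}$. The path $w_0 w_1 \cdots w_k$ consists of edges of $G$, so it represents the image under $\cl(G) \incl \cl(G^r)$ of an edge-path in $\cl(G)$.

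Doing this substitution on each edge of the original loop and concatenating the resulting homotopies shows that every class in $\pi_1(\cl(G^r),v_0)$ is hit by a loop coming from $\cl(G)$. This gives surjectivity. The whole argument is essentially a one-line observation (distance-$\le r$ paths give cliques in $G^r$), so there is no real obstacle; the only thing to be careful about is that connectedness of $G$ ensures the basepoint-based $\pi_1$ of $\cl(G^r)$ really is generated by edge-loops through $v_0$, which is automatic since $G$ and hence $G^r$ is connected.
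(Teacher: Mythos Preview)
Your proof is correct. The paper's argument is the same idea but organized slightly differently: it proceeds by induction, showing that $\pi_1(\cl(G^{r-1}))\to\pi_1(\cl(G^r))$ is surjective for each $r\ge 2$, and for this step it only needs triangles --- for each new edge $uv\in E(G^r)\setminus E(G^{r-1})$ one finds a midpoint $w$ with $uw,wv\in E(G^{r-1})$ and slides across the $2$-face $\{u,w,v\}$. Your version skips the induction by observing at once that the vertices of a shortest $G$-path of length $\le r$ form a full simplex of $\cl(G^r)$, so the substitution can be made in a single move. This is slightly slicker; the paper's version has the mild aesthetic advantage of using only the $2$-skeleton, in keeping with the fact that $\pi_1$ depends only on it.
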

\begin{proof}
It suffices to prove that $\pi_1(\cl(G^{r-1}))\to\pi_1(\cl(G^{r}))$ is surjective for $r\geq 2$. Consider a based path $\alpha$ in $\cl(G^{r})$. By cellular approximation we can assume it lies in the $1$-skeleton and is piecewise linear. If $e=uv\in E(G^r)\setminus E(G^{r-1})$ then there is a vertex $w$ such that $uw,wv\in E(G^{r-1})$. Then $\{u,w,v\}$ is a face of $\cl(G^r)$ and any segment of $\alpha$ going along $uv$ can be continuously deformed to go along $uwv$ without moving the endpoints. Performing this operation for every segment in $E(G^r)\setminus E(G^{r-1})$ we obtain a  based path homotopic to $\alpha$ which lies in $\cl(G^{r-1})$.
\end{proof}

One situation when $\cl(G^r)$ is homotopy equivalent to (in fact, collapses to) $\cl(G)$ is when $r$ is not too large compared to the girth of $G$. Of course as soon as $G$ is triangle-free $\cl(G)\homeo G$ is $1$-dimensional.
\begin{proposition}
\label{prop:largegirth}
Let $r\geq 1$. If $G$ is a graph of girth at least $3r+1$ then for every $2\leq k\leq r$ the complex $\cl(G^k)$ collapses to $\cl(G^{k-1})$. In particular $\cl(G^r)$ collapses to its subcomplex $\cl(G)\homeo G$.
\end{proposition}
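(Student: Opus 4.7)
The plan is to construct an acyclic matching on $\cl(G^k)\setminus\cl(G^{k-1})$ whose set of critical faces is exactly the subcomplex $\cl(G^{k-1})$; the collapsing fact above then yields that $\cl(G^k)$ collapses to $\cl(G^{k-1})$, and iterating for $k=r,r-1,\ldots,2$ gives the ``in particular'' statement (note $\cl(G)=G$ as a $1$-dimensional complex, since the girth hypothesis forces $G$ to be triangle-free).

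The geometric input I will need is a tree-hull property. Because the girth of $G$ is at least $3r+1>2k$, for any $u,v\in V(G)$ with $\dist_G(u,v)\leq k$ the shortest $u$-$v$ path is unique: two distinct such paths would combine into a cycle of length at most $2k\leq 2r$. Moreover, for any three vertices at pairwise distance $\leq k$ the three shortest paths between them form a topological tripod, since otherwise a cycle of length at most $3k\leq 3r<3r+1$ would appear. Extending this by a short induction on $|\sigma|$, for any $\sigma$ with $\mathrm{diam}_G(\sigma)\leq k$ the union of all shortest paths between pairs in $\sigma$ is a tree $T_\sigma\subseteq G$.

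Fix a total order on $V(G)$. For $\sigma\in\cl(G^k)\setminus\cl(G^{k-1})$ (so $\mathrm{diam}_G(\sigma)=k$), let $(u,v)$ be the lexicographically smallest pair in $\sigma$ at distance exactly $k$, and let $c(\sigma)$ be the vertex at distance $\lfloor k/2\rfloor$ from $u$ on the unique shortest $u$-$v$ path. A standard tree-metric calculation in $T_{\sigma\cup\{c(\sigma)\}}$ gives
$$\dist_G\bigl(c(\sigma),x\bigr)\leq \lceil k/2\rceil<k \quad\text{for every } x\in\sigma.$$
Consequently $\sigma\cup\{c(\sigma)\}\in\cl(G^k)$; moreover neither adding nor removing $c(\sigma)$ changes the set of pairs of $\sigma$ at distance exactly $k$, so the lex-smallest diametral pair $(u,v)$---and therefore $c(\sigma)$ itself---is invariant under the toggle $\sigma\mapsto\sigma\triangle\{c(\sigma)\}$. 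I define
$$M=\bigl\{(\sigma,\sigma\cup\{c(\sigma)\}):\sigma\in\cl(G^k)\setminus\cl(G^{k-1}),\ c(\sigma)\notin\sigma\bigr\},$$
which is a well-defined matching pairing every face of $\cl(G^k)\setminus\cl(G^{k-1})$.

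For acyclicity, write $w(\sigma)$ for the lex-smallest diametral pair of $\sigma$. Invariance gives $w(\sigma)=w(\sigma\triangle\{c(\sigma)\})$, and if $\sigma'\subsetneq\tau$ is a codimension-one face, then the diametral pairs of $\sigma'$ form a subset of those of $\tau$, so $w(\sigma')\geq_{\mathrm{lex}}w(\tau)$. Along any would-be alternating cycle $\sigma_0,\tau_0,\sigma_1,\tau_1,\ldots$ the quantity $w$ is therefore non-decreasing and hence constant; this forces $c(\sigma_i)$ to equal one fixed vertex $c$ throughout, and the cycle degenerates into an element-matching cycle on $c$, which is impossible since each $\sigma_i$ avoids $c$ while the next step $\sigma_{i+1}=\tau_i\setminus\{v_i\}$ with $v_i\neq c$ necessarily contains $c$. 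The main obstacle is establishing the tree-hull lemma together with the sharp distance bound $\dist_G(c(\sigma),x)\leq\lceil k/2\rceil$; once these are in place the combinatorics above completes the proof.
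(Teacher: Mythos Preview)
Your argument is correct and follows the same overarching plan as the paper's---build an acyclic matching on $\cl(G^k)\setminus\cl(G^{k-1})$ by toggling a ``central'' vertex---but the two proofs differ in how that vertex is chosen and how acyclicity is verified. The paper first shows that every face containing a new edge lies in a \emph{unique} maximal clique of $\cl(G^k)$ (this is where the girth bound $3r+1$ is sharply used); having done so, it picks a centre once per maximal clique and matches every new face inside that clique against that single vertex, so the matching decomposes into disjoint element matchings and acyclicity is automatic. You instead attach a centre $c(\sigma)$ to each face individually via the lex-smallest diametral pair, which spares you the uniqueness-of-maximal-clique lemma but forces you to prove acyclicity by the monotonicity of $w$ along alternating paths. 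Both routes work; the paper's makes the structural role of maximal cliques explicit, while yours shows that a direct face-by-face definition already suffices. One small comment: your general tree-hull claim for arbitrary $\sigma$ is stated somewhat loosely (the induction on $|\sigma|$ is not quite as immediate as suggested), but your distance estimate $\dist_G(c(\sigma),x)\leq\lceil k/2\rceil$ only actually needs the tripod case $\{u,v,x\}$, where any cycle in $P_{uv}\cup P_{ux}\cup P_{vx}$ would have at most $3k<3r+1$ edges---so this is harmless.
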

\begin{proof}
Let $\mathcal{E}=E(G^k)\setminus E(G^{k-1})$ be the set of ``new'' edges in $G^k$ and let $\mathcal{F}\subset\cl(G^k)$ be the set of faces which contain at least one edge of $\mathcal{E}$. We have $\cl(G^{k-1})=\cl(G^k)\setminus\mathcal{F}$. If $\mathcal{E}=\emptyset$ there is noting to do, so assume $\mathcal{E}\neq\emptyset$.

The nonexistence of cycles of length $3k$ or less in $G$ has the following consequences. First, every maximal clique $\sigma$ in $G^k$ corresponds to a subtree of diameter $k$ in $G$. Second, every edge in $\mathcal{E}$ (hence also every face in $\mathcal{F}$) belongs to a \emph{unique} maximal face of $\cl(G^k)$. To see the second statement let $e=uv\in\mathcal{E}$ and suppose $x$, $y$ are two vertices such that $xuv$ and $yuv$ are both faces of $\cl(G^k)$. Denote by $\alpha$ the shortest path in $G$ from $u$ to $v$. By the first observation there is a vertex $x'\in\alpha$ such that the shortest paths from $x$ to $u$ and $v$ join the path $\alpha$ at $x'$. Similarly, there is a  $y'\in\alpha$ with the same property for $y$ and we may assume w.l.o.g. that the order along $\alpha$ is $u-x'-y'-v$. Then
\begin{eqnarray*}
\dist_G(x,y)&=&\dist_G(x,x')+\dist_G(x',y')+\dist_G(y',y)=\\
&=&\dist_G(x,v)+\dist_G(y,u)-\dist_G(u,v)\leq k+k-k=k
\end{eqnarray*}
which proves the claim.

Let $\sigma$ be some maximal face of $\cl(G^k)$ and $v\in\sigma$ any fixed vertex whose distance in $G$ to all vertices of $G[\sigma]$ is strictly less than $k$ (for example the centre of the tree $G[\sigma]$). We define an acyclic matching $M_\sigma$ on $\sigma$ by taking all the pairs $$(f,f\cup\{v\})$$ for all faces $f\in \mathcal{F}$ such that $f\subset \sigma\setminus\{v\}$. Since no edge of $\mathcal{E}$ which lies in $\sigma$ has $v$ as its endpoint, every face of $\mathcal{F}$ contained in $\sigma$ is of the form $f$ or $f\cup\{v\}$ above. It follows that $M_\sigma$ matches all faces of $\sigma$ which are in $\mathcal{F}$ (and only those).

Let $M=\bigcup_\sigma M_\sigma$ be the union of those matchings over all maximal faces $\sigma$. By the previous remarks it is well-defined, acyclic and its critical faces form the subcomplex $\cl(G^{k-1})$. This ends the proof.
\end{proof}

The girth bound of $3r+1$ is optimal, because $\cl(C_{3r}^r)\htpyequiv\bigvee^{r-1} S^2$ by the results of Section \ref{sec:cyclepowers}.

One standard way of analyzing the homotopy type of $\cl(G)$ is via the notions of folds and dismantlability. If $u,v\in V(G)$ are distinct vertices such that $N_G[u]\subset N_G[v]$ then we say $G$ \emph{folds onto} $G\setminus u$. A graph $G$ is \emph{dismantlable} if there exists a sequence of folds from $G$ to a single vertex. It is a classical fact that a fold preserves the homotopy type of the clique complex and, in fact, induces a collapsing of $\cl(G)$ onto $\cl(G\setminus u)$, so the clique complex of a dismantlable graph is collapsible (see for example \cite[Lemma 2.2]{BFJ}). In this context we have the following simple result.
\begin{lemma}
\label{lemma:dismant}
If $G$ is dismantlable then so is $G^r$ for any $r\geq 1$.
\end{lemma}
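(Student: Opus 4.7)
My plan is to prove this by induction on $|V(G)|$, using the same folding vertex that dismantles $G$ to dismantle $G^r$. The base case $|V(G)|=1$ is trivial, so assume $G$ has at least two vertices and is dismantlable, meaning there exist distinct $u,v\in V(G)$ with $N_G[u]\subset N_G[v]$ and such that $G\setminus u$ is again dismantlable. I want to show that the very same pair $(u,v)$ witnesses a fold of $G^r$ onto $(G\setminus u)^r$.

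The first key step is to verify $N_{G^r}[u]\subset N_{G^r}[v]$. Take any $w$ with $\dist_G(u,w)\leq r$. If $w=u$ there is nothing to show, so pick a shortest $u$-$w$ path $u=w_0,w_1,\ldots,w_k=w$ in $G$, where $1\leq k\leq r$. Then $w_1\in N_G(u)\subset N_G[v]$, so either $w_1=v$ or $vw_1\in E(G)$; in either case $\dist_G(v,w)\leq 1+(k-1)=k\leq r$, i.e.\ $w\in N_{G^r}[v]$. Thus $G^r$ folds onto $G^r\setminus u$ using $v$.

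The second key step, and the point I expect to require the most care, is the identification $G^r\setminus u=(G\setminus u)^r$. The nontrivial inclusion is $G^r\setminus u\subset(G\setminus u)^r$, i.e.\ distances in $G\setminus u$ between vertices of $G\setminus u$ are no larger than in $G$. For this I reroute any shortest path through $u$: if $\ldots,a,u,b,\ldots$ is a segment, then $a,b\in N_G(u)\subset N_G[v]$, so $a,b\in\{v\}\cup N_G(v)$. If $a=v$ (resp.\ $b=v$) then $v=a$ is adjacent to $b$ (resp.\ $a$), so the subpath $a,u,b$ can be shortened to $a,b$; if neither equals $v$, then $a,v,b$ is a path of the same length avoiding $u$. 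Iterating removes every occurrence of $u$ without lengthening the path, so $\dist_{G\setminus u}(a,b)=\dist_G(a,b)$ for all $a,b\neq u$, giving the claimed equality of graphs.

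Combining these two steps, $G^r$ folds onto $(G\setminus u)^r$. Since $G\setminus u$ has fewer vertices and is dismantlable, the inductive hypothesis applies to yield that $(G\setminus u)^r$ is dismantlable, and hence so is $G^r$. The main obstacle really is just the second step: without verifying that $(G\setminus u)^r$ coincides with $G^r\setminus u$, one could not chain the induction, since a priori deleting $u$ from $G$ might destroy shortcuts present in $G^r$. The fold hypothesis $N_G[u]\subset N_G[v]$ is precisely what guarantees that $v$ is a universal detour around $u$.
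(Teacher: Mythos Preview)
Your proof is correct and follows essentially the same approach as the paper's: induction on $|V(G)|$, verifying both that $N_{G^r}[u]\subset N_{G^r}[v]$ and that $(G\setminus u)^r=G^r\setminus u$ via the rerouting-through-$v$ argument. The paper presents the two key steps in the opposite order and is terser about the path rerouting, but the logic is identical.
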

\begin{proof}
We use induction on $|V(G)|$. Let $u$ be a vertex such that $G$ folds onto $G\setminus u$ and $G\setminus u$ is dismantlable. Let $v$ be a vertex which satisfies $N_G[u]\subset N_G[v]$. First note that
$$(G\setminus u)^r=G^r\setminus u.$$
Indeed, the inclusion $\subset$ is obvious. For $\supset$ note that any occurrence of $u$ in a path can be replaced with $v$ or removed without increasing the length of the path.

The graph $G^r\setminus u=(G\setminus u)^r$ is dismantlable by induction. Moreover $N_{G^r}[u]\subset N_{G^r}[v]$. It follows that $G^r$ folds onto $G^r\setminus u$ and the dismantlability of $G^r$ is proved.
\end{proof}

Both \ref{prop:largegirth} and \ref{lemma:dismant} imply the following.
\begin{corollary}
\label{cor:treepower}
For every tree $T$ and any integer $r$ the complex $\cl(T^r)$ is collapsible (and, in particular, contractible).
\end{corollary}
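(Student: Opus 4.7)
The plan is to derive the corollary directly from either Proposition~\ref{prop:largegirth} or Lemma~\ref{lemma:dismant}; both routes are essentially one line, and I would likely present the dismantlability argument as the cleanest option.

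For the route via Proposition~\ref{prop:largegirth}, the first step is to invoke the convention fixed in the preliminaries that the girth of a forest is $\infty$. In particular, any tree $T$ has girth exceeding $3r+1$ for every $r\geq 1$, so the hypothesis of the proposition is vacuously satisfied. The proposition then produces a sequence of collapses from $\cl(T^r)$ down to $\cl(T)$. Since $T$ is triangle-free, $\cl(T)$ is the $1$-dimensional complex that agrees with $T$ itself, and a nonempty tree is collapsible: repeatedly remove a leaf edge together with its pendant vertex (a free face). Concatenating the two collapse sequences gives a collapse of $\cl(T^r)$ to a point.

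Alternatively, via Lemma~\ref{lemma:dismant}, I would first verify that every tree is dismantlable by induction on $|V(T)|$: given any leaf $u$ with unique neighbor $v$, the inclusion $N_T[u]=\{u,v\}\subset N_T[v]$ shows that $T$ folds onto $T\setminus u$, which is a smaller tree (or a single vertex, the base case). Lemma~\ref{lemma:dismant} then implies that $T^r$ is dismantlable for every $r\geq 1$, and by the classical fact recalled in the paragraph preceding that lemma, the clique complex of a dismantlable graph is collapsible.

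There is no substantive obstacle here, since both ingredients are already in place. The only mild subtlety is to check that either the girth hypothesis or the folding inclusion holds unconditionally for trees, and in either case this is immediate from the definitions. (If one really wanted to cover $r=0$ in the statement, the claim is false when $|V(T)|\geq 2$ since $T^0$ is edgeless, so I would implicitly read ``any integer $r$'' as $r\geq 1$ to match the scope of both earlier results.)
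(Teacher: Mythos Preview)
Your proposal is correct and matches the paper exactly: the paper simply states that both Proposition~\ref{prop:largegirth} and Lemma~\ref{lemma:dismant} imply the corollary, and you have spelled out precisely those two one-line arguments. Your remark about $r=0$ is also apt, though the paper leaves this unaddressed.
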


\section{Stability}
\label{sec:nobadcliques}

In this section we only consider graph squares ($r=2$). We describe more general criteria which guarantee that the inclusion $\cl(G)\incl\cl(G^2)$ is a homotopy equivalence. 

Note that for any vertex $v$ of $G$ the set $N_G[v]$ forms a clique in $G^2$. 
\begin{theorem}
\label{thm:equiv}
Suppose $G$ satisfies the following condition:
\begin{itemize}
\item Every clique in $G^2$ is contained in a set of the form $N_G[v]$ for some vertex $v$.
\end{itemize}
Then the inclusion $i:\cl(G)\incl\cl(G^2)$ is a homotopy equivalence.
\end{theorem}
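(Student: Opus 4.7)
The plan is to apply the nerve theorem to the cover of $\cl(G)$ by the closed stars $U_v := \st_{\cl(G)}(v) = \cl(G)[N_G[v]]$ for $v \in V(G)$. Each $U_v$ is a cone with apex $v$ and hence contractible, and together they cover $\cl(G)$. For any $S \subset V(G)$ the intersection $\bigcap_{v \in S} U_v$ equals the induced subcomplex $\cl(G)[D_S]$, where $D_S = \bigcap_{v \in S} N_G[v]$ is the set of common closed neighbours of $S$.

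The crux is to show that $\cl(G)[D_S]$ is contractible whenever $D_S \neq \emptyset$. First observe that $D_S \cup S$ is a clique in $G^2$: any two vertices of $D_S$ lie within $G$-distance two through any $v \in S$, any two vertices of $S$ similarly through any element of $D_S$, and cross terms are at $G$-distance at most one. Applying the hypothesis to this clique, there exists $z \in V(G)$ with $D_S \cup S \subset N_G[z]$. The inclusion $S \subset N_G[z]$ says $z \in N_G[v]$ for every $v \in S$, hence $z \in D_S$; the inclusion $D_S \subset N_G[z]$ then says $z$ is $G$-adjacent to every other vertex of $D_S$. Thus $z$ is a dominating vertex of the induced subgraph $G[D_S]$, making $\cl(G)[D_S]$ a cone with apex $z$, and in particular contractible.

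The nerve theorem therefore gives $\cl(G) \simeq \mathcal{N}$, where $\mathcal{N}$ has vertex set $V(G)$ and $\{v_1, \ldots, v_k\}$ is a face iff $\bigcap N_G[v_i] \neq \emptyset$. By the hypothesis in one direction and the trivial observation in the other, this is equivalent to $\{v_1, \ldots, v_k\}$ being a clique in $G^2$, so $\mathcal{N} = \cl(G^2)$ as simplicial complexes. Moreover, for a point $x$ in the relative interior of a face $\sigma \in \cl(G)$ the indices $v$ with $x \in U_v$ are exactly the vertices of $\sigma$, so the natural nerve map sends $x$ to the same barycentric point in $\sigma \subset \cl(G^2)$; hence it coincides with the inclusion $i$, which is therefore a homotopy equivalence.

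The main technical obstacle will be the contractibility of $\cl(G)[D_S]$. On its own, $D_S$ carries no visible dominating vertex, so a direct application of the hypothesis to $D_S$ is insufficient; the trick is to invoke it on the enlarged clique $D_S \cup S$, which simultaneously forces the witnessing vertex $z$ into $D_S$ (because it dominates $S$) and identifies it as a dominator of $D_S$ in $G$.
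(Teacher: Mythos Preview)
Your argument is very close to the paper's, and the heart of it --- applying the hypothesis to the enlarged clique $D_S\cup S$ to extract a vertex $z$ that simultaneously lands in $D_S$ and dominates $G[D_S]$ --- is exactly the key step the paper uses. The difference is only in the global wrapper: the paper covers $\cl(G^2)$ by the full simplices $\cl(G^2)[N_G[v]]$ and invokes McCord's local criterion, which directly shows that the \emph{specific map} $i$ is a weak (hence homotopy) equivalence; you instead cover $\cl(G)$ by closed stars, apply the nerve lemma to get $\cl(G)\simeq\mathcal{N}$, and then identify $\mathcal{N}$ with $\cl(G^2)$.

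That final identification step is where your write-up slips. For $x$ in the relative interior of a face $\sigma$, the set $\{v:x\in U_v\}$ is not $\sigma$ but $D_\sigma=\bigcap_{w\in\sigma}N_G[w]$, which in general strictly contains $\sigma$. So the nerve map (however you construct it) does not literally send $x$ to ``the same barycentric point in $\sigma$''. The fix is easy: since $\sigma\subset D_\sigma$, the inclusion $i$ is carried by the contractible carrier $\sigma\mapsto D_\sigma$, and so is any nerve-theorem equivalence; by the acyclic carrier lemma the two maps are homotopic, hence $i$ is a homotopy equivalence. Alternatively, just switch to the paper's packaging via McCord's criterion, which builds this conclusion in and spares you the extra step.
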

\begin{proof}
By passing to connected components we can assume $G$ is connected. 
We use the following local criterion of \cite[Thm. 6]{McC} (see also \cite[Cor. 1.4]{May}):
\begin{itemize}
\item Suppose $p:X\to Y$ is a continuous map and $Y$ has an open cover $\niceu=\{U_\alpha\}$ such that if $U,V\in\niceu$ then $U\cap V\in\niceu$. If for every $U\in\niceu$ the restriction $p|_{p^{-1}(U)}:p^{-1}(U)\to U$ is a weak equivalence then so is $p$.
\end{itemize}
Since we are working with finite simplicial complexes we can just as well replace open sets with closed subcomplexes (by taking a small open neighbourhood of a subcomplex) and weak equivalences with homotopy equivalences (by Whitehead's theorem).

For each $v\in V(G)$ let $U_v=\cl(G^2)[N_G[v]]$. Each of $U_v$ is a simplex in $\cl(G^2)$. By assumption we have $\cl(G^2)=\bigcup_{v\in V(G)} U_v$ and we can take a cover $\niceu$ of $\cl(G^2)$ consisting of all intersections of the sets $U_v$.


If $U=U_{v_1}\cap\cdots\cap U_{v_k}$ is non-empty then it is an intersection of faces of $\cl(G^2)$, hence it is contractible. It remains to show that $i^{-1}(U)$ is also contractible. Let $X=\bigcap_{i=1}^k N_G[v_i]$ be the set of vertices spanning $U$. Since $i$ is a subcomplex inclusion, we have $i^{-1}(U)=\cl(G)[X]$. Because in $G$ every vertex of $X$ is in distance at most $1$ from each of $v_i$, the set $X\cup\{v_1,\ldots,v_k\}$ forms a clique in $G^2$. Our assumption then gives a vertex $v$ such that
$$X\cup\{v_1,\ldots,v_k\}\subset N_G[v].$$
In particular $v\in N_G[v_i]$ for each $i=1,\ldots,k$, so $v\in X$. Moreover, since $X\subset N_G[v]$, the vertex $v$ is adjacent in $G$ to every other element of $X$, i.e. $G[X]$ is a cone with apex $v$. It implies that $\cl(G)[X]=\cl(G[X])$ is a simplicial cone with apex $v$, hence it is contractible. This completes the proof.
\end{proof}

\begin{figure}
\includegraphics[scale=0.15]{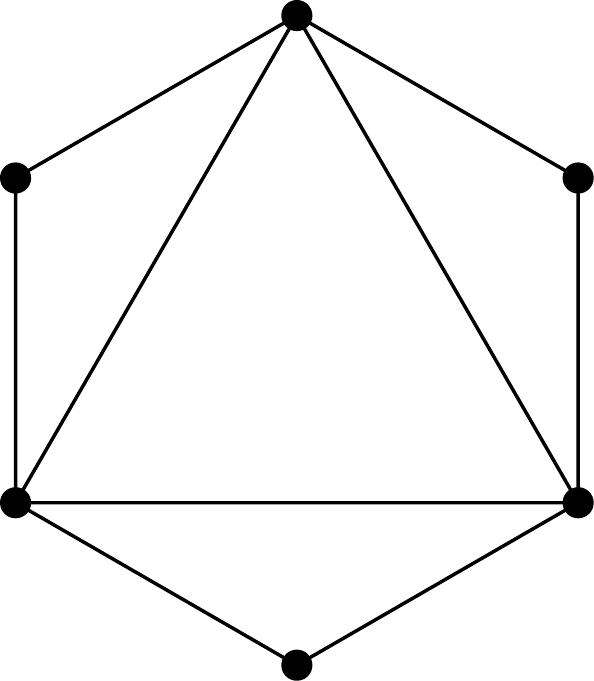}
\caption{The $3$-sun graph $S_3$.}
\label{fig:3sun}
\end{figure}

There is a more direct combinatorial condition which guarantees that the assumption of Theorem \ref{thm:equiv} is satisfied. Recall that we say $G$ is \emph{$H$-free} if $G$ does not have an \emph{induced} subgraph isomorphic to $H$. If $H_1,\ldots,H_k$ is a sequence of graphs then $G$ is $(H_1,\ldots,H_k)$-free if it does not have any of the $H_i$ as induced subgraphs. 

Consider the graph of Fig.\ref{fig:3sun}, usually denoted $S_3$ and called $3$-sun.
\begin{theorem}
\label{thm:induced}
If $G$ is $(C_4,C_5,C_6,S_3)$-free then $G$ satisfies the condition in Theorem \ref{thm:equiv}
\end{theorem}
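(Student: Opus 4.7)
The plan is to prove by induction on $|K|$ that every clique $K$ of $G^2$ is contained in some $N_G[v]$. The cases $|K|\leq 2$ are immediate, so the content is the base case $|K|=3$ and a Helly-style inductive step for $|K|\geq 4$.

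For $|K|=3$ I write $K=\{a,b,c\}$ and split on the number of edges $G[K]$ carries. Two or more edges give a dominator at the shared endpoint. For exactly one edge, say $ab\in E(G)$, I fix common neighbours $p\in N_G(a)\cap N_G(c)$ and $q\in N_G(b)\cap N_G(c)$; the sub-cases $p=q$, $pb\in E(G)$, $qa\in E(G)$ produce a dominator at once, and otherwise the pentagon on $a,p,c,q,b$ is an induced $C_5$ when $pq\notin E(G)$, while the quadrilateral on $a,p,q,b$ is an induced $C_4$ when $pq\in E(G)$. For no edges in $G[K]$, take common neighbours $x,y,z$ of the three pairs $\{a,b\},\{a,c\},\{b,c\}$ respectively; all six vertices are distinct and the ``extra'' edges $xc, yb, za$ are absent (otherwise a dominator appears). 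I then case-split on how many of $xy,xz,yz$ lie in $E(G)$: all three give an induced $S_3$ on $\{a,b,c,x,y,z\}$; exactly two, say $xy,xz$ but not $yz$, give the induced $C_4$ on $y,x,z,c$; exactly one, say $xy$, gives the induced $C_5$ on $x,b,z,c,y$; none gives the induced $C_6$ on $a,x,b,z,c,y$. Each no-dominator branch thus yields one of the four forbidden subgraphs, contradicting the hypothesis.

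For $|K|\geq 4$, I apply the inductive hypothesis to each $K\setminus\{v_i\}$ to obtain $w_i$ with $K\setminus\{v_i\}\subset N_G[w_i]$. If any $w_i\in N_G[v_i]$ then $K\subset N_G[w_i]$ and we are done; otherwise $w_iv_i\notin E(G)$ and $w_i\neq v_i$ for all $i$, so in particular $w_1\neq w_2$. On $\{w_1,w_2,v_1,v_2\}$ I have the edges $w_1v_2, w_2v_1$ and the non-edges $w_1v_1, w_2v_2$, so if both $v_1v_2$ and $w_1w_2$ were in $E(G)$ we would read off an induced $C_4$. Hence at least one of these pairs is a non-edge, and I then exploit that $v_3,\ldots,v_n$ are common neighbours of $w_1,w_2$ in $G$, together with the standard observation (an immediate consequence of $C_4$-freeness) that $N_G(u)\cap N_G(u')$ is a clique whenever $uu'\notin E(G)$. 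Iterating this inside the subgraph spanned by $\{v_1,v_2,v_3,w_1,w_2,w_3\}$ either forces two of the $w_i$ to coincide --- returning us to the dominator case --- or produces a further induced $C_4$.

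The main obstacle is the $|K|=3$ step: this is where all four forbidden subgraphs are genuinely required, and the bookkeeping in the no-edge branch (where all six vertices $\{a,b,c,x,y,z\}$ must be handled simultaneously) needs care. Once the base case is in place, the inductive step is a Helly-type continuation that rides almost entirely on $C_4$-freeness.
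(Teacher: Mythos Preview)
Your base case $|K|=3$ is correct and carefully executed; in effect you are reproving one direction of the Brouwer--Duchet--Schrijver characterisation that the paper simply cites. The paper's proof takes a minimal non-dominated clique, extracts the configuration
\[
N_G[w_i]\cap\{v_1,v_2,v_3\}=\{v_1,v_2,v_3\}\setminus\{v_i\},\qquad w_i\text{ distinct},
\]
and then quotes \cite{BDS} to produce a forbidden subgraph. Your base case is precisely the $|K|=3$ instance of this, with the BDS argument written out in full.

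The inductive step, however, is not a proof. The sentence ``Iterating this inside the subgraph spanned by $\{v_1,v_2,v_3,w_1,w_2,w_3\}$ either forces two of the $w_i$ to coincide or produces a further induced $C_4$'' is an assertion with no argument behind it, and the claim that only $C_4$-freeness is invoked is wrong. Concretely: if $v_1v_2,\,v_1v_3,\,v_2v_3\in E(G)$, then your own $C_4$-avoidance argument (applied to each $4$-tuple $\{v_i,v_j,w_i,w_j\}$) forces $w_iw_j\notin E(G)$ for all $i\neq j$, and the induced subgraph on the six vertices is then exactly $S_3$ --- no $C_4$ appears, and no two $w_i$ coincide. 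So $S_3$-freeness is genuinely needed in this branch. Note also that for $|K|\geq 4$ your initial split ``two or more edges in $G[\{v_1,v_2,v_3\}]$ give a dominator'' no longer gives a contradiction: it only dominates the triple, not $K$.

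The clean fix is not to set up a separate Helly argument at all. Your base-case analysis already handles the $0$- and $1$-edge patterns on $\{v_1,v_2,v_3\}$; add the two missing patterns using the $w_i$ supplied by minimality: with exactly two edges, say $v_1v_2,v_1v_3\in E(G)$ and $v_2v_3\notin E(G)$, the $4$-tuple $\{v_2,v_1,v_3,w_1\}$ is an induced $C_4$; with three edges, argue as above to get $S_3$ or $C_4$. This gives ``configuration $(*)\Rightarrow$ forbidden subgraph'' directly, and the theorem follows for the minimal bad clique of any size, with no separate inductive step required.
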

\begin{proof}
Suppose, on the contrary, that $K=\{v_1,\ldots,v_k\}$ is the smallest clique in $G^2$ which is not contained in any set $N_G[v]$. Then $k\geq 3$ and there exist $w_1,\ldots,w_k$ such that $K\setminus v_i\subset N_G[w_i]$. The vertices $w_1,\ldots,w_k$ are pairwise distinct (as $w_i=w_j$ would mean $K\subset N_G[w_i]$) and there is no edge $w_iv_i$ in $G$ for any $i$ (same reason). It means that we have
\begin{equation}
\label{loc1}
\tag{*}\left\{
\begin{array}{l}
N_G[w_1]\cap\{v_1,v_2,v_3\}=\{v_2,v_3\}\\
N_G[w_2]\cap\{v_1,v_2,v_3\}=\{v_3,v_1\}\\
N_G[w_3]\cap\{v_1,v_2,v_3\}=\{v_1,v_2\}\\
v_1\neq v_2\neq v_3\neq v_1, \quad w_1\neq w_2\neq w_3\neq w_1
\end{array}\right.
\end{equation}
By Theorem 3 of \cite{BDS} a graph is $(C_4,C_5,C_6,S_3)$-free if and only if it does not have a configuration satisfying (\ref{loc1}).
\footnote{Which, using the notation of \cite{BDS}, is saying that the \emph{neighbourhood hypergraph} of $G$ is triangle-free.} That ends the proof.
\end{proof}

\begin{remark} For an arbitrary graph $G$ one might at least hope that the inclusion $\cl(G)\incl\cl(G^2)$ stabilizes the homotopy type, for example by increasing the connectivity of the space. This is not the case. For example, let $G$ be the graph of Fig.\ref{fig:nasty} and let $V$ denote the set of vertices of the outermost $6$-cycle. Then $\cl(G)$ is contractible while one can check by a direct calculation that $H_2(\cl(G^2))=\zet\oplus\zet$ where one of the generators is represented by the subcomplex $\cl(G^2)[V]$, homeomorphic to $S^2$.
\end{remark}
\begin{figure}
\includegraphics[scale=0.15]{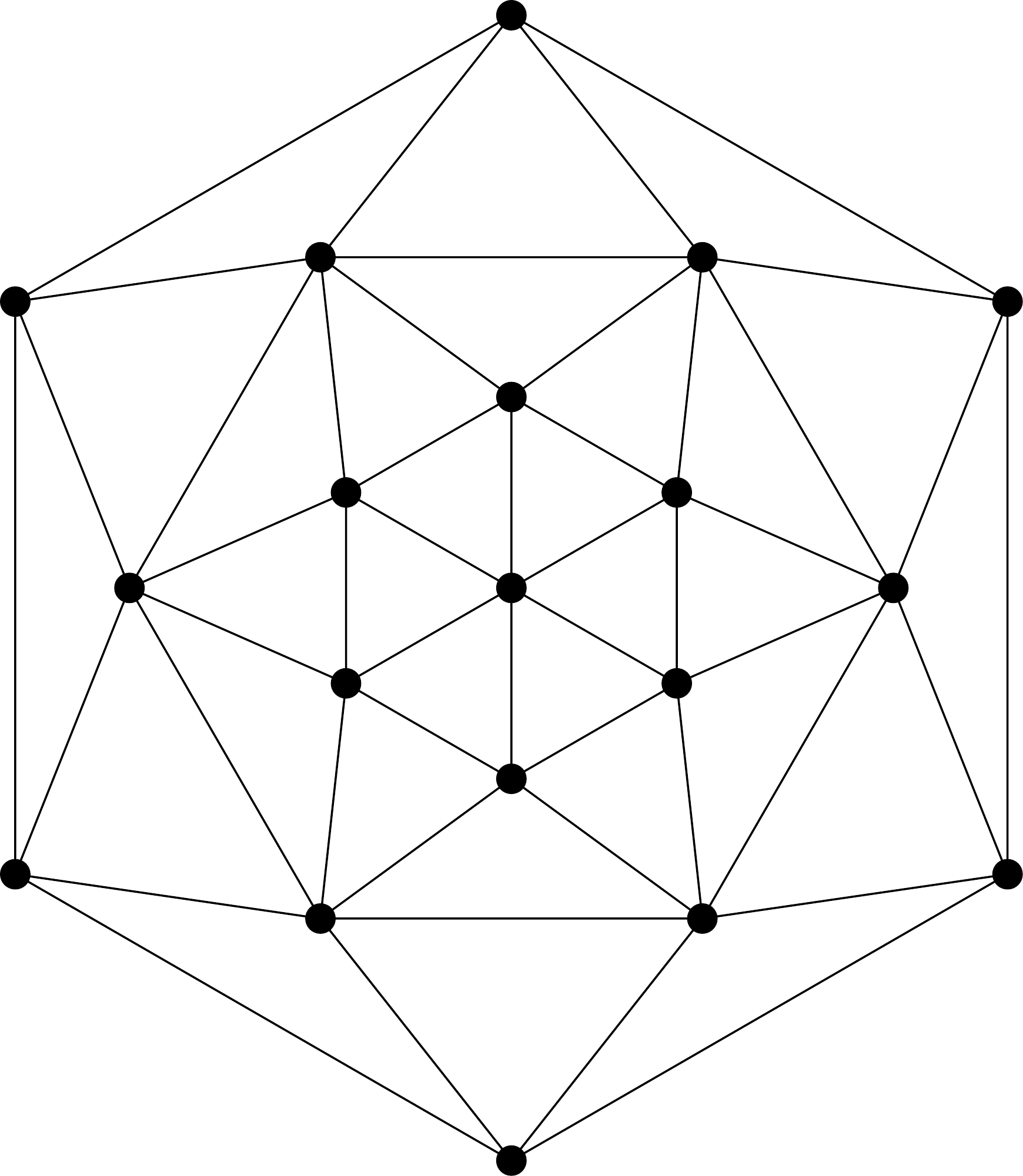}
\caption{A graph $G$ with $\cl(G)\htpyequiv\ast$ and $H_2(\cl(G^2))\neq 0$.}
\label{fig:nasty}
\end{figure}

\begin{remark}
The converse of Theorem \ref{thm:induced} is false as can be seen by taking any graph $G$ which is a cone and has one of the forbidden induced subgraphs. The converse of Theorem \ref{thm:equiv} is also false and the counterexample is the 3-sun $S_3$. Indeed, $\cl(S_3)\htpyequiv \cl(S_3^2)\htpyequiv\ast$ and $S_3^2=K_6$ is complete but $S_3$ itself is not a cone.
\end{remark}

\section{Universality}
\label{sec:realization}
It is a known fact that any finite simplicial complex $K$ is homeomorphic to $\cl(G)$ for some graph $G$. One can take $G$ to be the $1$-skeleton of the barycentric subdivision of $K$.

Clearly clique complexes of higher graph powers cannot represent all homeomorphism types. For instance, if $\cl(G^2)$ is a connected space of dimension two then every vertex of $G$ must have degree at most $2$. It means $G$ must be a path or cycle and a direct check narrows the possible two-dimensional homeomorphism types of $\cl(G^2)$ to $D^2$, $S^2$, the M\"obius strip and $D^1\times S^1$, where $D^n$ is the $n$-dimensional disk.

It is, however, true that arbitrary graph powers realize all homotopy types.
\begin{theorem}
\label{thm:realization}
For every finite simplicial complex $K$ and integer $r\geq 1$ there exists a graph $G$ such that $\cl(G^r)$ is homotopy equivalent to $K$.
\end{theorem}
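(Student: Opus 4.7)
For $r=1$ this is the classical fact quoted at the start of the section: take $G$ to be the $1$-skeleton of the barycentric subdivision $\bary(K)$, so that $\cl(G)\homeo K$ on the nose. For $r\geq 2$ the strategy I would follow, in line with the hint in the introduction, is to replace $\bary(K)$ by a much finer iterated subdivision and then control the clique structure of its $r$-th power.

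Fix a large integer $N=N(K,r)$ (to be chosen in the proof) and let $L=\bary^N(K)$; let $G$ be the $1$-skeleton of $L$. Since $L$ is the order complex of a poset, it is flag, so $\cl(G)=L\homeo K$, and we get an inclusion of simplicial complexes $\cl(G)\incl\cl(G^r)$ on a common vertex set. The central technical claim is the following \emph{star-clustering property}:

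\emph{For $N$ sufficiently large, every clique of $G^r$ is contained in $N_G[v]$ for some vertex $v\in V(G)$.}

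Heuristically, $G$-distance in $L$ approximates the piecewise-linear distance in $|L|=|K|$, and a set of $G$-vertices with pairwise distance at most $r$ lies in a PL-ball whose radius shrinks to $0$ as $N\to\infty$; such a ball is eventually contained in the open star of a single vertex of $L$, whose vertex set is precisely an $N_G[v]$.

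Granting the star-clustering property, the argument of Theorem~\ref{thm:equiv} generalizes essentially verbatim from $r=2$ to arbitrary $r\geq 2$. Cover $\cl(G^r)$ by the simplices $U_v=\cl(G^r)[N_G[v]]$ (each $N_G[v]$ remains a clique in $G^r$, hence $U_v$ is a full simplex and contractible). For any non-empty intersection $U=U_{v_1}\cap\cdots\cap U_{v_k}$ with vertex set $X=\bigcap_{i}N_G[v_i]$, note that $X\cup\{v_1,\ldots,v_k\}$ forms a clique in $G^2\subseteq G^r$ (any $u\in X$ witnesses $\dist_G(v_i,v_j)\leq 2$), so the star-clustering gives a vertex $v$ with $X\cup\{v_1,\ldots,v_k\}\subseteq N_G[v]$; in particular $v\in X$ and $G[X]$ is a cone with apex $v$. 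Hence $i^{-1}(U)=\cl(G)[X]$ is a simplicial cone, and the local-to-global criterion cited in the proof of Theorem~\ref{thm:equiv} yields $\cl(G)\htpyequiv\cl(G^r)$, i.e. $\cl(G^r)\htpyequiv K$.

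The main obstacle is establishing the star-clustering property quantitatively: one has to show that for $N$ large enough, no $r$-clique in $G^r$ can straddle several stars of $L$. Shortest $G$-paths in the $1$-skeleton of $\bary^N(K)$ can zig-zag through several simplices, so bounding the spread of $r$-cliques requires a careful combinatorial analysis of shortest paths inside $\bary^N(\Delta^d)$ for each simplex $\Delta^d$ of $K$ and of how such paths glue across adjacent simplices — precisely the kind of shortest-path analysis the introduction attributes to results of \cite{Doch}. Once the bound $N(K,r)$ is secured, the rest of the proof is the generalization of Theorem~\ref{thm:equiv} sketched above.
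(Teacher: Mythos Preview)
Your star-clustering property is false for every $r\geq 3$, and no amount of subdivision fixes it. Take $K=\partial\Delta^2$, the hollow triangle. Then $\bary^N(K)=C_{3\cdot 2^N}$ and $G$ is this cycle. For any $n\geq 2r+2$ the set $\{0,1,\ldots,r\}$ is a clique in $C_n^r$ of size $r+1$, while every closed neighbourhood $N_G[v]$ in a cycle has exactly $3$ vertices; so no $r$-clique of size $>3$ can sit inside any $N_G[v]$. The flaw in your heuristic is that as $N$ grows, the metric diameter of an $r$-clique does shrink, but the open stars of vertices of $L=\bary^N(K)$ shrink at exactly the same rate: the picture is scale-invariant, so subdivision buys nothing. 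Consequently the cover $\{U_v\}$ you describe is not a cover of $\cl(G^r)$ for $r\geq 3$, and the generalization of Theorem~\ref{thm:equiv} never gets off the ground.

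The paper's argument differs in two essential respects. First, the cover of $\cl(G_s^r)$ is indexed by the vertices of the \emph{original} complex $K$, not by the vertices of $G_s$: one takes $X_{s,v}=\cl\big((G_s[B_{s,v}])^r\big)$ where $B_{s,v}$ is the open ball of radius $2^s$ about $v$ in $G_s$. These balls are huge compared to $r<2^{s-2}$, so any $r$-clique (together with the shortest paths between its points) really does land in one of them. Second, the covering pieces are no longer simplices, so their contractibility must be supplied externally: the paper uses Dochtermann's result that each $G_s[B_{s,v_1}\cap\cdots\cap B_{s,v_k}]$ is dismantlable, together with Lemma~\ref{lemma:dismant}, and then the nerve lemma with nerve $K$. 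The delicate step is showing that the $X_{s,v}$ intersect correctly (Proposition~\ref{lemma:intersect}), which is where the shortest-path analysis (Lemma~\ref{lemma:distbig}) actually enters---not to pin cliques into $1$-neighbourhoods, but to prevent short paths from escaping the intersection of two large balls.
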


In fact there is an explicit candidate for $G$. Given a finite complex $K$ and $s\geq 0$ let $\bary^s K$ denote its $s$-th iterated barycentric subdivision and let the graph $G_{s}$ be its $1$-skeleton: $$G_{s}=(\bary^s K)^{(1)}$$
(from now on we will suppress the complex $K$ from notation). Replacing, if needed, $K$ with its subdivision we can assume $K=\cl(G_0)$ and then for every $s\geq 0$ we have $\bary^s K=\cl(G_s)$. Then we have the following result.

\begin{theorem}
\label{thm:explicit}
For any finite simplicial complex $K$ and $1\leq r<2^{s-2}$
$$\cl((G_{s})^r)\htpyequiv K.$$
\end{theorem}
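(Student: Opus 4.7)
Since $\cl(G_s) = \bary^s K$ is homeomorphic to $K$, it is enough to prove that the inclusion $i : \cl(G_s) \incl \cl(G_s^r)$ is a homotopy equivalence. I would replay the McCord--May local criterion used in the proof of Theorem \ref{thm:equiv}, choosing a cover of $\cl(G_s^r)$ by induced subcomplexes, closed under intersection, such that both the cover sets and their preimages under $i$ are contractible.

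The natural cover is indexed by the vertices $w$ of a suitable coarser subdivision $\bary^{s-t} K$ (with $t$ calibrated against $r$; I expect $t=2$ to suffice, matching the exponent in $2^{s-2}$). For such a $w$, let $V_w$ be the set of vertices of $\bary^s K$ lying in the closed star $\st_{\bary^{s-t} K}(w)$, and set $U_w = \cl(G_s^r)[V_w]$; extend the family to all nonempty intersections. The intersection property is automatic: the intersection of closed stars of vertices $w_1,\ldots,w_k$ in any simplicial complex is the closed star of the face $\{w_1,\ldots,w_k\}$ whenever that face exists, and empty otherwise, so $V_{w_1}\cap\cdots\cap V_{w_k}$ is itself of the form $V_\sigma$ for a face $\sigma$. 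The preimage $i^{-1}(U_w) = \cl(G_s)[V_w]$ is canonically $\bary^t$ of the cone $\st_{\bary^{s-t} K}(w)$, hence contractible; the same argument applies to all intersections, since stars of higher-dimensional faces remain cones on their links.

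Two things remain: (a) the cover property, that every clique in $G_s^r$ is contained in some $V_w$; and (b) the contractibility of the $U_w$ themselves. For (b) the clean statement would be that $V_w$ already forms a clique in $G_s^r$, making $U_w$ a simplex; if the geometric estimate is not quite strong enough to give this directly, I would prove (b) by induction on $s$ (or on the dimension of $K$), applying the theorem to the smaller complex $\st_{\bary^{s-t} K}(w)$ which carries its own iterated subdivision structure and an analogous bound on the radius. Item (a) is the geometric heart of the theorem and the main obstacle: it demands showing that any set of vertices with pairwise $G_s$-distance at most $r$ lies in a single closed star of $\bary^{s-t} K$. This is where the bound $r < 2^{s-2}$ enters, reflecting the heuristic that each barycentric subdivision roughly doubles the graph distance one must travel to escape the star of a vertex of the previous subdivision, so after $s-2$ subdivisions a radius strictly less than $2^{s-2}$ is not enough to leave a single closed star of $\bary^{s-2} K$. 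Making this quantitative --- presumably via the shortest-path analysis in iterated barycentric subdivisions and the results of Docherty cited in the introduction --- is the only non-formal step; granted (a) and (b), the rest is a routine nerve/McCord--May application.
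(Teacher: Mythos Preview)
Your overall architecture---cover, local contractibility, nerve/McCord--May---is the paper's, but there is a genuine gap in step (b), and it is precisely the subtlety the paper is built around.

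You set $U_w = \cl(G_s^r)[V_w]$, an \emph{induced} subcomplex. The paper explicitly avoids this: its cover sets are $X_{s,v} = \cl\bigl((G_s[B_{s,v}])^r\bigr)$, which are \emph{not} induced in $\cl(G_s^r)$; it warns that $\cl(G_s^r[B_{s,v}])$ ``is usually bigger''. The difference is that two vertices of $V_w$ may be at distance $\le r$ in $G_s$ via a path that leaves $V_w$; they are then adjacent in $G_s^r[V_w]$ but not in $(G_s[V_w])^r$. This happens exactly for vertices on $\lk(w)$, which is where your closed-star sets place them. Your proposed inductive fix---apply the theorem to the cone $\st_{\bary^{s-t}K}(w)$---proves the wrong thing: it yields $\cl\bigl((G_s[V_w])^r\bigr)\simeq *$, not $\cl(G_s^r)[V_w]\simeq *$. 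Nothing is said about why the extra ``shortcut'' edges through neighbouring stars preserve contractibility. (Your ``clean statement'' that $V_w$ is already a clique in $G_s^r$ also fails: the closed star of a vertex of $\bary^{s-2}K$ has $G_s$-diameter much larger than $r$.)

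The paper's repair is to (i) index the cover by vertices of the \emph{original} $K$ and use open balls $B_{s,v}=\{w:\dist_{G_s}(v,w)<2^s\}$ rather than closed stars; (ii) take the non-induced $X_{s,v}$ above; (iii) show not only that every clique $\sigma$ of $G_s^r$ lies in some $B_{s,v}$ but that all $G_s$-geodesics between its vertices stay inside $B_{s,v}$---this is where $r<2^{s-2}$ is actually spent, via two triangle inequalities---so that $\sigma\in X_{s,v}$; and (iv) prove a separate metric lemma (Lemma~\ref{lemma:distbig}) to identify $X_{s,v_1}\cap\cdots\cap X_{s,v_k}$ with $\cl\bigl((G_s[\bigcap_i B_{s,v_i}])^r\bigr)$, which is then contractible by Dochtermann's dismantlability (Proposition~\ref{prop:doch}) plus Lemma~\ref{lemma:dismant}. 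The nerve of this cover is literally $K$, and the nerve lemma finishes. So your sketch is salvageable, but only after replacing induced subcomplexes by the ``internal-path'' ones and supplying a genuine metric argument for the intersections rather than a formal induction.
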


The proof strategy resembles that of \cite{Doch}. For any vertex $v$ of the original complex $K$ let $B_{s,v}$ and $S_{s,v}$ denote the vertex sets in $G_s$ defined as
\begin{eqnarray*}
B_{s,v}&=&\{w: \dist_{G_{s}}(v,w)< 2^s\},\\
S_{s,v}&=&\{w: \dist_{G_{s}}(v,w) = 2^s\}.
\end{eqnarray*}
The letters $B$ and $S$ stand for the open Ball and the Sphere of radius $2^s$ around $v$ in $G_s$. In the geometric realization the vertices of $B_{s,v}$ belong to the open star $\st_K(v)\setminus\lk_K(v)$ of $v$ in $K$ while the vertices of $S_{s,v}$ lie in the link $\lk_K(v)$. Note that $B_{s,v_1}\cap\cdots\cap B_{s,v_k}$ is nonempty if and only if $\{v_1,\ldots,v_k\}$ is a face of $K$.

The main technical result we use is proved in \cite[3.7,3.8]{Doch}.
\begin{proposition}[\cite{Doch}] 
\label{prop:doch}
For any face $\{v_1,\ldots,v_k\}$ of $K$ the graph
$$G_s[B_{s,v_1}\cap\cdots\cap B_{s,v_k}]$$
is dismantlable.
\end{proposition}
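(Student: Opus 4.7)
The plan is to prove Proposition \ref{prop:doch} by combining a combinatorial description of the balls $B_{s,v}$ with an inductive dismantling argument. The first and main ingredient is a geometric identification: a vertex $w$ of $\bary^sK$ lies in $B_{s,v}$ if and only if $v$ is a vertex of the \emph{carrier} of $w$ in $K$, by which I mean the unique face $c(w)\in K$ whose relative interior contains $w$ under the natural identification $\bary^sK\homeo K$. The direction $\dist_{G_s}(v,w)<2^s\Rightarrow v\in c(w)$ is handled by induction on $s$: a walk in $G_s$ from $v$ to $w$ can be ``coarsened'' to a walk in $G_{s-1}$ by replacing each new vertex of the last subdivision with an endpoint of the edge of $\bary^{s-1}K$ it subdivides, and the length drops by roughly a factor of two. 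The converse is established by exhibiting an explicit path from $v$ to $w$ of length less than $2^s$, marching through barycenters of an increasing chain of faces of $K$ from $v$ up to $c(w)$ at successive levels of subdivision.

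Granting this identification, the intersection $B_{s,v_1}\cap\cdots\cap B_{s,v_k}$ becomes exactly the set of vertices of $\bary^sK$ whose carrier in $K$ contains the face $\sigma=\{v_1,\ldots,v_k\}$. Combinatorially, this vertex set underlies the $s$-fold barycentric subdivision of the open star of $\sigma$ in $K$, i.e.\ of a space homeomorphic to the open cone on $\lk_K(\sigma)$; the graph $G_s[B_{s,v_1}\cap\cdots\cap B_{s,v_k}]$ is then the $1$-skeleton of this subcomplex.

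To deduce dismantlability I would induct on $s$. The base case $s=1$ is transparent: the induced subgraph is the comparability graph of the poset $\{\tau\in K:\sigma\subseteq\tau\}$, and $\sigma$ itself is comparable with every other element, so it is a universal vertex and the whole graph folds to a point. For $s\geq 2$ I view $\bary^sK$ as $\bary(\bary^{s-1}K)$ and fold vertices in order of decreasing rank of their carrier chain in $\bary^{s-1}K$: at each step the vertex being folded dominates into a barycenter of a proper subface of its carrier that still contains $\sigma$, the required neighbourhood inclusion being verified by the $s=1$ argument applied locally, while the inductive hypothesis handles the residual structure one subdivision down.

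The main obstacle is the distance estimate in the first step. Geodesics in $G_s=(\bary^sK)^{(1)}$ can take shortcuts across the interiors of high-dimensional simplices of $K$, and the tight bound ``$<2^s$'' rather than ``$\leq 2^s$'' has to be tracked carefully through the induction; this is precisely where \cite[3.7]{Doch} does the delicate work. Once the carrier characterization is available, the dismantling is almost formal, since iterated barycentric subdivisions are saturated with dominating vertices — barycenters of low-rank faces dominating those of higher rank — and $\sigma$ itself plays the role of the ultimate sink.
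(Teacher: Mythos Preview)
The paper does not supply a proof of this proposition; it simply cites \cite[3.7,3.8]{Doch}. So there is no in-paper argument to compare against, and the question is just whether your sketch stands on its own.

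Your carrier characterization of $B_{s,v}$ is correct and agrees with what the paper uses implicitly (``the vertices of $B_{s,v}$ belong to the open star of $v$ in $K$''). The base case $s=1$ of your dismantling argument is also fine: the induced subgraph is the comparability graph of $\{\tau\in K:\sigma\subseteq\tau\}$, and $\sigma$ is a universal vertex.

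The inductive step, however, does not work as you describe it. There are two concrete problems. First, $G_{s-1}[W']$ (where $W'=B_{s-1,v_1}\cap\cdots\cap B_{s-1,v_k}$) is \emph{not} a subgraph of $G_s[W]$: two vertices of $\bary^{s-1}K$ are never adjacent in $\bary^sK$, so there is no way to ``fold down'' to the previous level and then invoke the inductive hypothesis. Second, the strategy of folding in decreasing rank into a proper subface already fails in the smallest nontrivial case. Take $K$ the $1$-simplex $\{0,1\}$, $\sigma=\{0\}$, $s=2$. Then $G_2[W]$ is the path
\[
\{0\}\ -\ \{0,m\}\ -\ \{m\}\ -\ \{m,1\}
\]
on the faces of $\bary K$ (with $m$ the midpoint). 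The rank-$1$ vertex $\{0,m\}$ has closed neighbourhood $\{\{0\},\{0,m\},\{m\}\}$, which is contained in neither $N[\{0\}]=\{\{0\},\{0,m\}\}$ nor $N[\{m\}]=\{\{0,m\},\{m\},\{m,1\}\}$; so it cannot be folded into either of its subfaces. The graph is of course dismantlable (it is a tree), but not by your scheme.

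What actually makes Dochtermann's argument go through is more delicate than ``barycenters of low-rank faces dominate those of higher rank''; that heuristic is simply false in $(\bary L)^{(1)}$, where a top-rank vertex (a maximal face of $L$) is adjacent to all of its subfaces while a rank-$0$ vertex is adjacent only to faces containing it. You will need either to consult \cite{Doch} for the correct folding order, or to find a different invariant to induct on.
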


Now consider an integer $r<2^{s-2}$. We intend to prove that $\cl(G_s^r)\htpyequiv K$ using the nerve lemma \cite[15.21]{Koz}. Define subcomplexes of $\cl(G_s^r)$ by
\begin{equation}
\label{eq:defofx}
X_{s,v}=\cl((G_s[B_{s,v}])^r)\subset \cl(G_s^r)
\end{equation}
for the vertices $v$ of $K$. The reader should be warned that the subcomplex $X_{s,v}$ is \emph{not} induced; in particular it should not be confused with $\cl(G_s^r[B_{s,v}])$, which is usually bigger.

\begin{proposition}
\label{lemma:covering}
The family of subcomplexes $X_{s,v}$ is a covering of $\cl(G_s^r)$. The nerve of this covering is $K$.
\end{proposition}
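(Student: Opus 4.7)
My plan has two parts matching the two assertions. Throughout I rely on the geometric characterisation invoked in the excerpt: a vertex $w$ of $G_s$ lies in $B_{s,v}$ exactly when $v$ is a vertex of the unique minimal face $\tau(w)$ of $K$ whose relative interior contains $w$.

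For the nerve identification I argue directly from this characterisation. For any $\{v_1,\ldots,v_k\} \subseteq V(K)$ the intersection
\[
\bigcap_{i=1}^k B_{s,v_i} \;=\; \{w \in V(G_s) : \{v_1,\ldots,v_k\} \subseteq \tau(w)\}
\]
is nonempty iff $\{v_1,\ldots,v_k\}$ is a face of $K$ (if so, realise it by the barycentre of that face). A single vertex $w$ in this intersection is already a $0$-face of $\bigcap_i X_{s,v_i}$, and conversely every face of $\bigcap_i X_{s,v_i}$ has its vertices in $\bigcap_i B_{s,v_i}$. Hence the nerve of $\{X_{s,v}\}_{v \in V(K)}$ coincides with $K$.

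For the covering assertion, let $\sigma = \{u_1,\ldots,u_m\}$ be a face of $\cl(G_s^r)$, so its pairwise $G_s$-distances are at most $r < 2^{s-2}$. I need $v \in V(K)$ such that $\sigma \subseteq B_{s,v}$ and such that for every pair $u_i, u_j$ there is a $G_s$-path of length at most $r$ between them that stays inside $B_{s,v}$. The natural choice is a vertex common to all carriers $\tau(u_i)$. Both requirements are driven by the same underlying fact about iterated barycentric subdivision: each subdivision step doubles graph distances between the original $K$-vertices, so $2^s$ is precisely the graph-metric diameter in $G_s$ of a single closed face of $K$. Since $r < 2^{s-2}$ lies well below this scale, $\sigma$ together with all its $G_s$-geodesics must fit inside the open star of a single vertex of $K$.

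The main obstacle is making the previous paragraph precise. I would prove by induction on $s$ two lemmas: (A) if $\dist_{G_s}(u,u') < 2^{s-1}$ then $\tau(u) \cap \tau(u') \neq \emptyset$, and for every $v$ in that intersection every $G_s$-geodesic from $u$ to $u'$ remains in $B_{s,v}$; (B) the extra factor of $2$ in $r < 2^{s-2}$ (rather than $2^{s-1}$) promotes the pairwise intersections $\tau(u_i) \cap \tau(u_j)$ provided by (A) to a single common vertex $v \in \bigcap_i \tau(u_i)$. Once (A) and (B) are in place, $\sigma \subseteq B_{s,v}$ is (B), and the clique condition in $(G_s[B_{s,v}])^r$ follows by applying (A) pairwise with this chosen $v$. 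Lemma (A) is handled inductively via the flag-poset structure of $\bary$, where adjacent vertices have carriers linearly ordered in the previous subdivision, while (B) is a bounded-diameter argument showing that a clique in $G_s^r$ small enough on the barycentric scale is contained in the open star of some vertex of $K$.
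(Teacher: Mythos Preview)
Your nerve argument is essentially the paper's (the biconditional $B_{s,v_1}\cap\cdots\cap B_{s,v_k}\neq\emptyset \Leftrightarrow \{v_1,\ldots,v_k\}\in K$ is stated just before the proposition), so that part is fine.

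Your covering argument, however, is much more complicated than necessary and has a real gap. The paper never uses the carrier characterisation here; it uses one elementary metric fact---every vertex $w\in V(G_s)$ satisfies $\dist_{G_s}(v,w)\le 2^{s-1}$ for some $v\in V(K)$---and then two applications of the triangle inequality. Pick any $w\in\sigma$, find such a $v$; then every $w'\in\sigma$ has $\dist_{G_s}(w',v)\le r+2^{s-1}<2^{s-2}+2^{s-1}<2^s$, so $\sigma\subset B_{s,v}$; and every $z$ on a $G_s$-geodesic between $w',w''\in\sigma$ has $\dist_{G_s}(z,v)\le r+(r+2^{s-1})<2^s$, so such geodesics also stay in $B_{s,v}$. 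That is the entire proof.

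Your route instead needs Lemma~(B) to locate a common $v\in\bigcap_i\tau(u_i)$ and then Lemma~(A) to keep geodesics inside $B_{s,v}$. But the only distance bound the carrier characterisation hands you is $\dist_{G_s}(v,u_i)<2^s$, and that is too weak: the triangle inequality then gives only $\dist_{G_s}(z,v)<r+2^s$, not $<2^s$. So neither lemma follows from the ingredients you have assembled, and you would have to prove them by some other (nontrivial) means. Worse, Lemma~(A) part~2 as you state it (``for \emph{every} $v$ in that intersection \emph{every} $G_s$-geodesic remains in $B_{s,v}$'') looks false: if $u,u'$ lie in the interior of a $K$-triangle but very near one of its vertices $a$, then any \emph{other} vertex $b$ of that triangle lies in $\tau(u)\cap\tau(u')$, yet a geodesic passing through $a$ leaves $B_{s,b}$ since $\dist_{G_s}(a,b)=2^s$. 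You actually only need \emph{some} geodesic to stay in $B_{s,v}$ for \emph{some} $v$, but you have not identified which $v$ makes this automatic---and the paper's choice of $v$ via the $2^{s-1}$ closeness fact is precisely what makes the triangle inequality close up.
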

\begin{proof}
The second statement is obvious since the vertex set of $X_{s,v_1}\cap\cdots\cap X_{s,v_k}$ is $B_{s,v_1}\cap\cdots\cap B_{s,v_k}$ and this is nonempty only for a face $\{v_1,\ldots,v_k\}$ of $K$.

Let us prove the first statement. Suppose $\sigma$ is a clique in $G_s^r$. Fix any $w\in\sigma$. There exists a vertex $v$ of $K$ such that 
$$\dist_{G_s}(v,w)\leq 2^{s-1}.$$
Fix also that $v$. Now any vertex $w'\in\sigma$ satisfies
\begin{eqnarray*}
\dist_{G_s}(w',v)&\leq& \dist_{G_s}(w',w)+\dist_{G_s}(w,v)\leq\\
&\leq& r+2^{s-1}<2^{s-2}+2^{s-1}\leq 2^s-1.
\end{eqnarray*}
Therefore $\sigma\subset B_{s,v}$.

Now we want to show that for any two vertices $w',w''\in\sigma$ the shortest path from $w'$ to $w''$ in $G_s$ lies in $G_s[B_{s,v}]$. Indeed, if $z$ is any vertex on that path then
\begin{eqnarray*}
\dist_{G_s}(z,v)&\leq& \dist_{G_s}(z,w')+\dist_{G_s}(w',v)\leq\\
&\leq& r+(r+2^{s-1})<2\cdot2^{s-2}+2^{s-1}= 2^s.
\end{eqnarray*}
so $z\in B_{s,v}$. Since $\sigma$ is a set of diameter at most $r$ in $G_s$ and the shortest paths between its vertices lie in $B_{s,v}$ it follows that $\sigma$ is a set of diameter at most $r$ in $G_s[B_{s,v}]$. It means that $\sigma\in X_{s,v}$.
\end{proof}

The point here was that the whole clique $\sigma$ was located at least $r$ steps away from $S_{s,v}$, so the path in $G_s$ could not take the advantage of any shortcut outside $B_{s,v}$.

Proposition \ref{lemma:covering} and the nerve lemma \cite[15.21]{Koz} imply Theorem \ref{thm:explicit} as soon as we prove that the nonempty intersections $X_{s,v_1}\cap\cdots\cap X_{s,v_k}$ are contractible. This is arranged for by the following lemma.
\begin{proposition}
\label{lemma:intersect}
For any vertices $v_1,\ldots,v_k$ of $K$ we have
$$X_{s,v_1}\cap\cdots\cap X_{s,v_k}=\cl((G_s[B_{s,v_1}\cap\cdots\cap B_{s,v_k}])^r)$$
\end{proposition}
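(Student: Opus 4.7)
My plan is to establish the equality of subcomplexes by proving both set-theoretic inclusions, abbreviating $B=B_{s,v_1}\cap\cdots\cap B_{s,v_k}$. The inclusion $\cl((G_s[B])^r)\subseteq\bigcap_i X_{s,v_i}$ is the easy one: since $B\subseteq B_{s,v_i}$, the graph $G_s[B]$ is a subgraph of $G_s[B_{s,v_i}]$, so any path of length $\le r$ in $G_s[B]$ also realises the pair as close in $G_s[B_{s,v_i}]$, and therefore any clique of $(G_s[B])^r$ sits inside each $X_{s,v_i}$.

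For the reverse inclusion, take $\sigma\in\bigcap_i X_{s,v_i}$. The containments $\sigma\subseteq B_{s,v_i}$ combine to give $\sigma\subseteq B$ for free, so the real content is to exhibit, for each pair $u,w\in\sigma$, a path of length $\le r$ from $u$ to $w$ inside $G_s[B]$. The natural candidate is a shortest $G_s$-path $\gamma$ between $u$ and $w$: its length satisfies $|\gamma|=\dist_{G_s}(u,w)\le\dist_{G_s[B_{s,v_1}]}(u,w)\le r$, so the only remaining task is to verify $\gamma\subseteq B$, i.e.\ $\gamma\subseteq B_{s,v_i}$ for every $i$.

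This last containment is the main technical obstacle. The bare triangle-inequality estimate $\dist_{G_s}(z,v_i)\le|\gamma|+\dist_{G_s}(u,v_i)\le r+(2^s-1)$ for $z\in\gamma$ misses the required bound $<2^s$ by about $r$, so a purely formal argument from the definitions will not suffice. To close the gap I would invoke a ``metric star'' property of the balls $B_{s,v}$ inside $\bary^s K$: for any $v\in V(K)$ and any $u,w\in B_{s,v}$ (at least in the small-distance regime $\dist_{G_s}(u,w)\le r<2^{s-2}$ we care about), every shortest $G_s$-path between $u$ and $w$ remains inside $B_{s,v}$. Combinatorially this is in the same spirit as the shortest-path estimate performed in the proof of \ref{lemma:covering} and in \cite[3.7,3.8]{Doch}; geometrically it reflects the star-convexity of the open star $\st_K(v)\setminus\lk_K(v)$ in the PL metric on $|K|$, and it is the place where the hypothesis $r<2^{s-2}$ really does the work. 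Granted this star property, the shortest $G_s$-path $\gamma$ lies in every $B_{s,v_i}$, hence in $B$, and therefore witnesses that $\sigma$ is a clique in $(G_s[B])^r$, completing the proof.
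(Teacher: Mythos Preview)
Your ``metric star'' property is actually \emph{false}, and the paper says so almost explicitly. Right after defining $X_{s,v}$ the paper warns that $X_{s,v}$ is \emph{not} the induced subcomplex $\cl(G_s^r)[B_{s,v}]=\cl(G_s^r[B_{s,v}])$, which is ``usually bigger''. Unpacking this: there exist $u,w\in B_{s,v}$ with $\dist_{G_s}(u,w)\le r$ but $\dist_{G_s[B_{s,v}]}(u,w)>r$; in particular every $G_s$-path of length $\le r$ from $u$ to $w$, including the shortest one, must leave $B_{s,v}$. So the property you propose to invoke does not hold, and the global geodesic $\gamma$ you chose has no reason to lie in any particular $B_{s,v_i}$. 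Your comparison with Proposition~\ref{lemma:covering} does not transfer: there the vertex $v$ is \emph{chosen} so that every point of $\sigma$ lies within $r+2^{s-1}$ of $v$, leaving a margin of at least $r$ to the sphere $S_{s,v}$; here the $v_i$ are prescribed and one only knows $\dist_{G_s}(u,v_i)\le 2^s-1$, with no margin at all.

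The paper's proof avoids any convexity statement about a single $B_{s,v}$. Instead it proves the edge-level statement $\mathcal{D}(k)$ by induction on $k$: assuming a path $\alpha$ of length $\le r$ inside $B_{s,v_1}\cap\cdots\cap B_{s,v_{k-1}}$ and a path $\beta$ of length $\le r$ inside $B_{s,v_k}$, if neither already lies in the full intersection then $\alpha$ hits $S_{s,v_k}$ at some $p$ and $\beta$ hits some $S_{s,v_j}$ (say $j=1$) at $q$. Concatenating pieces of $\alpha$ and $\beta$ gives a path of length at most $2r<2^{s-1}$ inside $B_{s,v_1}\cup B_{s,v_k}$ from $p\in B_{s,v_1}\cap S_{s,v_k}$ to $q\in S_{s,v_1}\cap B_{s,v_k}$, contradicting Lemma~\ref{lemma:distbig}, which says such a path must have length at least $2^s$. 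The real work is thus pushed into that lemma, not into any geodesic-convexity of the balls.
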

\begin{proof}
We can restrict to the case when $\{v_1,\ldots,v_k\}$ is a face of $K$, otherwise the intersections are empty. By the definition of $X_{s,v}$ what we need to prove is
$$\cl((G_s[B_{s,v_1}])^r)\cap\cdots\cap \cl((G_s[B_{s,v_k}])^r)=\cl((G_s[B_{s,v_1}\cap\cdots\cap B_{s,v_k}])^r).$$
The inclusion $\supset$ is obvious, so we need to prove $\subset$. It is equivalent to the statement
\begin{itemize}
\item[$\mathcal{D}(k)\ $:] If $u,w\in B_{s,v_1}\cap\cdots\cap B_{s,v_k}$ are vertices such that
$$\dist_{G_s[B_{s,v_1}]}(u,w)\leq r,\ \ldots,\ \dist_{G_s[B_{s,v_{k}}]}(u,w)\leq r$$
then
$$\dist_{G_s[B_{s,v_1}\cap\cdots\cap B_{s,v_k}]}(u,w)\leq r.$$
\end{itemize}
We prove it by induction on $k$. Clearly $\mathcal{D}(1)$ holds. Now suppose $k\geq 2$. By induction there is a path $\alpha$ from $u$ to $w$ in $G_s[B_{s,v_1}\cap\cdots\cap B_{s,v_{k-1}}]$ of length at most $r$. Denote by $\beta$ the path in $G_s[B_{s,v_k}]$ from $u$ to $w$ of length at most $r$. If $\alpha$ lies completely in $B_{s,v_k}$ or $\beta$ lies in $B_{s,v_1}\cap\cdots\cap B_{s,v_{k-1}}$ then $\mathcal{D}(k)$ follows. If none of those two cases holds then $\alpha$ passes through some point $p\in S_{s,v_k}$ and $\beta$ passes through some $q\in S_{s,v_1}\cup\cdots\cup S_{s,v_{k-1}}$. Assume without loss of generality that $q\in S_{s,v_1}$. Then $\xymatrix{p \ar@{-}[r]^\alpha& u \ar@{-}[r]^\beta & q}$ is a path in $G_s$ of length at most $2r<2\cdot 2^{s-2}<2^s$ which connects $p\in B_{s,v_1}\cap S_{s,v_k}$ with $q\in  S_{s,v_1}\cap B_{s,v_k}$ and this whole path lies in $B_{s,v_1}\cup B_{s,v_k}$ (because $\alpha\subset G_s[B_{s,v_1}]$ and $\beta\subset G_s[B_{s,v_k}]$). The existence of such path, however, is excluded by the next lemma and this contradiction ends the inductive step.
\end{proof}

We are left with the last technical lemma whose intuitive meaning is the following. Suppose $\sigma,\tau$ are two faces of the same simplex in $K$. Suppose we look at the $s$-th barycentric subdivision of $K$ and the paths in its $1$-skeleton. Then the points of $\sigma$ are very far apart from the points of $\tau$ if one is not allowed to go through $\sigma\cap\tau$.

Using the standard notation
$$\dist_G(X,Y)=\min\{\dist_G(x,y): x\in X, y\in Y\}$$
for $X,Y\subset V(G)$ we can express this idea as follows (see Fig.\ref{fig:ugh}).
\begin{lemma}
\label{lemma:distbig}
For any two adjacent vertices $u,v$ of the original complex $K$
$$\dist_{G_s[B_{s,u}\cup B_{s,v}]}(B_{s,u}\cap S_{s,v}, S_{s,u}\cap B_{s,v})=2^s.$$
\end{lemma}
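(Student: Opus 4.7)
The plan is to prove the two inequalities $\le 2^s$ and $\ge 2^s$ separately. For the easy direction, I would use the explicit path along the subdivided edge $uv$. Since $u$ and $v$ are adjacent in $K$, a straightforward induction on $s$ (done in parallel with the main induction below) shows $\dist_{G_s}(u,v)=2^s$ and produces a length-$2^s$ geodesic $u=x_0,x_1,\ldots,x_{2^s}=v$ in $G_s$ along the subdivisions of the edge $uv$, with $\dist_{G_s}(x_i,u)=i$ and $\dist_{G_s}(x_i,v)=2^s-i$. Each $x_i$ then lies in $B_{s,u}\cup B_{s,v}$, and the endpoints satisfy $u\in B_{s,u}\cap S_{s,v}$ and $v\in S_{s,u}\cap B_{s,v}$, witnessing the upper bound.

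For the lower bound I would use induction on $s$. The base case $s=0$ is trivial, since $B_{0,u}\cup B_{0,v}=\{u,v\}$ and the induced subgraph is the single edge $uv$, of length $1=2^0$. For the inductive step, given a path $P=(w_0,\ldots,w_\ell)$ in $G_s[B_{s,u}\cup B_{s,v}]$ connecting $p\in B_{s,u}\cap S_{s,v}$ to $q\in S_{s,u}\cap B_{s,v}$, the idea is to construct a walk $P'$ in $G_{s-1}[B_{s-1,u}\cup B_{s-1,v}]$ of length at most $\lceil\ell/2\rceil$, starting in $B_{s-1,u}\cap S_{s-1,v}$ and ending in $S_{s-1,u}\cap B_{s-1,v}$. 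Applying the inductive hypothesis to $P'$ then forces $\lceil\ell/2\rceil\ge 2^{s-1}$, hence $\ell\ge 2^s$.

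The walk $P'$ would arise from a projection $\pi\colon V(G_s)\to V(G_{s-1})$ sending each clique $\sigma$ of $G_{s-1}$ (i.e., each vertex of $G_s=(\bary\cl(G_{s-1}))^{(1)}$) to a carefully chosen member. Adjacent vertices of $G_s$ correspond to strict clique inclusion, so any two consecutive images $\pi(w_i),\pi(w_{i+1})$ lie in a common clique of $G_{s-1}$, hence are equal or adjacent in $G_{s-1}$. The halving of the length comes from choosing $\pi(w_i)\in w_{i-1}\cap w_{i+1}$ at each ``peak'' $w_{i-1}\subsetneq w_i\supsetneq w_{i+1}$, so that such peaks contribute no step to $P'$; ``valleys'' $w_{i-1}\supsetneq w_i\subsetneq w_{i+1}$ are handled analogously by setting $\pi$ equal to a fixed vertex of $w_i$ for all three indices.

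The main obstacle I foresee is making $\pi$ satisfy all the required properties simultaneously: shrinking the length by a factor of two, preserving membership in $B_{s-1,u}\cup B_{s-1,v}$, and landing the endpoints in the correct sphere-ball intersections. The nonemptiness of $w_{i-1}\cap w_{i+1}$ at each peak will rely on the restricted-subgraph hypothesis, and the ball/sphere transfer needs an auxiliary factor-of-two relation of the form $\dist_{G_s}(x,u)\approx 2\,\dist_{G_{s-1}}(\pi(x),u)$ for $u\in V(K)$, which has to be proved in parallel with the main induction. Getting all three properties to hold at once with a single consistent choice of $\pi$ is the technical heart of the argument.
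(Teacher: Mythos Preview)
Your upper bound is fine and matches the paper. For the lower bound, however, the paper takes a quite different route: rather than projecting paths to $G_{s-1}$ and inducting on length, it constructs a \emph{potential function}. Inductively one defines a partial labeling $l_s$ of $G_s$, with values in $\{0,\tfrac{1}{2^s},\tfrac{2}{2^s},\ldots,1\}$, defined exactly on $B_{s,u}\cup B_{s,v}$, constant equal to $0$ on $B_{s,u}\cap S_{s,v}$ and to $1$ on $S_{s,u}\cap B_{s,v}$, and such that adjacent labeled vertices differ by at most $2^{-s}$. The construction is simple: $l_0(u)=0$, $l_0(v)=1$, and each vertex of $G_s$, being a face $\tau$ of $\cl(G_{s-1})$, gets the average of the (at most two) values $l_{s-1}$ takes on $\tau$. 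The lower bound is then immediate: any path in $G_s[B_{s,u}\cup B_{s,v}]$ between the two sets must realize a total label change of $1$ in increments of at most $2^{-s}$, hence has length $\ge 2^s$. This Lipschitz argument completely bypasses the combinatorics of path projection.

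Your projection strategy may be completable, but as written it contains a concrete mistake. At a peak $w_{i-1}\subsetneq w_i\supsetneq w_{i+1}$ the intersection $w_{i-1}\cap w_{i+1}$ can be empty (e.g.\ $\{a\}\subsetneq\{a,b,c\}\supsetneq\{b\}$), and nothing about lying in $B_{s,u}\cup B_{s,v}$ prevents this; the ``restricted-subgraph hypothesis'' does not help here. What \emph{is} true is that any choices $\pi(w_{i-1})\in w_{i-1}$ and $\pi(w_{i+1})\in w_{i+1}$ already lie in the clique $w_i$ of $G_{s-1}$ and are therefore adjacent, so you can simply drop peaks from the projected walk. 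Organizing this properly (valley-to-valley segments each have length $\ge 2$ in $G_s$ and contribute one edge to $P'$) does give the halving, but you still have to arrange the endpoint and containment conditions. For that, the cleanest tool is the geometric description recorded just before Proposition~\ref{prop:doch}: membership in $B_{s,u}$ and $S_{s,u}$ is governed by whether the barycenter lies in $\st_K(u)\setminus\lk_K(u)$ or in $\lk_K(u)$, which does transfer well under $\pi$. So your plan is not hopeless, but the paper's labeling argument is both shorter and avoids all of this case analysis; you should compare the two before committing to the projection route.
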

\begin{proof}
\begin{figure}
\includegraphics[scale=1]{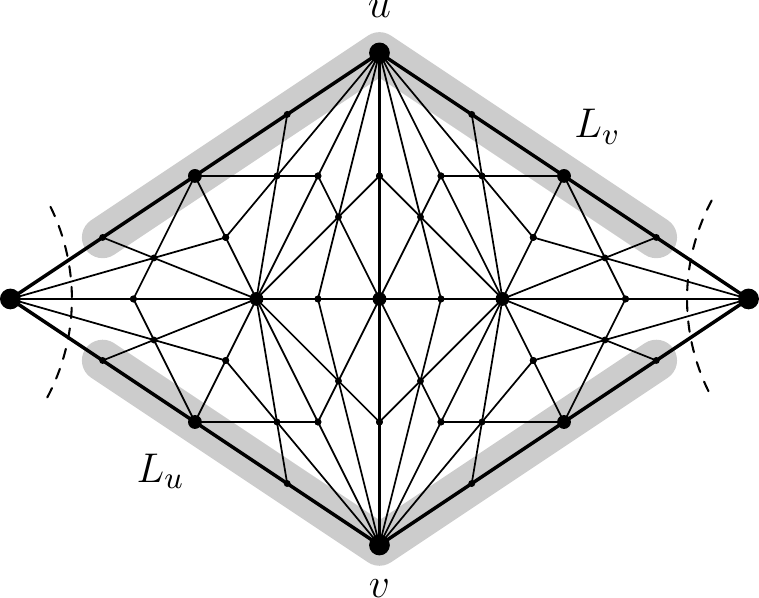}
\caption{An example with $s=2$. The shaded sets $L_u=S_{s,u}\cap B_{s,v}$ and $L_v=B_{s,u}\cap S_{s,v}$ contain vertices of $B_{s,u}\cup B_{s,v}$ in distance $2^s$ from, respectively, $u$ and $v$. The distance between these two sets within $B_{s,u}\cup B_{s,v}$ is also $2^s$, although their distance in $G_s$ is only $2$.}
\label{fig:ugh}
\end{figure}
A \emph{partial labeling} $l$ of a graph $G$ is an assignment of a real number $l(v)$ to some of the vertices of $G$. If $X\subset V(G)$ we write $l(X)$ for the set of labels assigned to the vertices in $X$, with $l(X)=\emptyset$ if the value of $l(v)$ is undefined for all $v\in X$.

We will construct partial labelings $l_s$ of $G_s$ for $s\geq 0$ with the properties:
\begin{itemize}
\item[a)] The set of vertices for which $l_s$ is defined is $B_{s,u}\cup B_{s,v}$.
\item[b)] For every simplex $\sigma\in\cl(G_s)$ the vertices of $\sigma$ are assigned at most two different labels.
\item[c)] For every edge $xy\in E(G_s)$ such that both $l_s(x)$ and $l_s(y)$ are defined we have $$|l_s(x)-l_s(y)|\in\{0,\frac{1}{2^s}\}.$$
\item[d)] $l_s(B_{s,u}\cap S_{s,v})=\{0\}$, $l_s(S_{s,u}\cap B_{s,v})=\{1\}$.
\end{itemize}

The partial labeling $l_0$ is defined by $l_0(u)=0$, $l_0(v)=1$ and undefined otherwise. Suppose $l_{s-1}$ has been defined. Every vertex $x\in V(G_s)$ represents a face $\tau\in\cl(G_{s-1})$ and we set
\begin{equation*}
l_s(x)=\left\{ 
\begin{array}{ll}
a & \mathrm{if}\ l_{s-1}(\tau)=\{a\} \\
(a+b)/2 & \mathrm{if}\  l_{s-1}(\tau)=\{a,b\} \\
\mathrm{undefined} & \mathrm{if}\  l_{s-1}(\tau)=\emptyset .
\end{array}
\right.
\end{equation*}
This is well-defined since $l_{s-1}$ satisfies b). Note that if $x\in V(G_{s-1})$ then $l_s(x)=l_{s-1}(x)$.

To prove that $l_s$ satisfies a) recall that $B_{s-1,u}\cup B_{s-1,v}$ are the vertices of $G_{s-1}$ located in the union of the open stars of $u$ and $v$ in $K$. Therefore a vertex $x$ of $G_s$ receives a label from $l_s$ if and only if it represents a face of $\cl(G_{s-1})$ which intersects that union of open stars. Such a vertex $x$ itself lies in that union, therefore in $B_{s,u}\cup B_{s,v}$. To prove d) note that if $x$ is a vertex of $B_{s,u}\cap S_{s,v}$ then $x$ lies in the link $\lk_Kv$, hence it represents a face of $\cl(G_{s-1})$ contained in that link. By induction all vertices of that face are $l_{s-1}$-labeled $0$ or unlabeled hence $l_s(x)=0$ (since $x\in B_{s,u}$ it cannot remain unlabeled). This and a symmetric argument for $S_{s,u}\cap B_{s,v}$ proves d).

If $\tau\in \cl(G_{s-1})$ is a simplex with $l_{s-1}(\tau)=\{a\}$ then every vertex $x\in G_s$ which subdivides a face of $\tau$ will receive $l_s$-label $a$ or no label at all and therefore b), c) still hold for the simplices and edges of $\cl(G_s)$ contained within $\tau$. Now suppose that $l_{s-1}(\tau)=\{a,b\}$. Note that no simplex of $\cl(G_s)$ subdividing $\tau$ contains vertices $x,y$ with $l_s(x)=a$ and $l_s(y)=b$. Indeed, if $\tau_1$ and $\tau_2$ are the faces of $\tau$ in $\cl(G_{s-1})$ represented by $x$ and $y$ respectively, then $\tau_1\cap l_{s-1}^{-1}(a)\neq\emptyset$, $\tau_1\cap l_{s-1}^{-1}(b)=\emptyset$ and vice versa for $\tau_2$. But then neither $\tau_1\subseteq \tau_2$ nor $\tau_2\subseteq \tau_1$ hence $xy$ is not an edge in $\bary(\tau)$. Eventually we conclude that for every simplex of $\cl(G_s)$ subdividing $\tau$ the set of $l_s$-labels is either empty, or a singleton or one of $\{a,\frac{a+b}{2}\}$, $\{b,\frac{a+b}{2}\}$. This, together with the induction hypothesis, proves b) and c) in this case.

The existence of the partial labeling $l_s$ completes the proof of the lemma: every path from $B_{s,u}\cap S_{s,v}$ to $S_{s,u}\cap B_{s,v}$ in $G_s[B_{s,u}\cup B_{s,v}]$ passes through $l_s$-labeled vertices (by a)). In each step the label changes by at most $\frac{1}{2^s}$ (by c)) while the total change is $1$ (by d)). It means that the path requires at least $2^s$ steps. Of course there exists a path (e.g. the subdivision of the edge $uv$) of length exactly $2^s$.

\end{proof}

For a convenient reference let us summarize the proof of Theorem \ref{thm:explicit}.
\begin{proof}[Proof of Theorem \ref{thm:explicit}]
Fix $1\leq r<2^{s-2}$. Consider the subcomplexes $X_{s,v}$ of $\cl(G_s^r)$ defined in (\ref{eq:defofx}). By Proposition \ref{lemma:covering} they form a covering of $\cl(G_s^r)$ with nerve $K$. By Proposition \ref{lemma:intersect} every nonempty intersection of the $X_{s,v_i}$ is of the form
$$\cl((G_s[B_{s,v_1}\cap\cdots\cap B_{s,v_k}])^r).$$
Every such complex is contractible because Proposition \ref{prop:doch} and Lemma \ref{lemma:dismant} imply that the graph $(G_s[B_{s,v_1}\cap\cdots\cap B_{s,v_k}])^r$ is dismantlable. The equivalence $\cl(G_s^r)\htpyequiv K$ now follows from the nerve lemma \cite[15.21]{Koz}.
\end{proof}

\begin{remark}
The purpose of \cite{Doch} was to prove that for any complex $K$ and any connected, non-discrete graph $T$ there exists a graph $G$ with a homotopy equivalence
$$\cl(G^T)\htpyequiv K$$
where $(-)^T$ denotes the \emph{exponential graph} functor, the right adjoint to the categorical product $-\times T$ of graphs (see \cite[18.18]{Koz}). The idea was to use the graph $G_s$ and its subgraphs $G_s[B_{s,v}]$ to form a covering of $G_s^T$ (for $s$ depending on the diameter of $T$). Despite these similarities the author does not see a direct way to compare (up to homotopy) the complexes $\cl(G^r)$ of distance graph powers with any of the complexes $\cl(G^T)$.
\end{remark}

\section{Line graphs and edge subdivisions.}
\label{sect:line}
Let $S(G)$ denote the graph obtained from $G$ by subdividing every edge with one vertex. The graph $T(G)=S(G)^2$ is often called the \emph{total graph} of $G$. Recall that the \emph{line graph} $L(G)$ of $G$ is the incidence graph of the edges of $G$.

Write $V(S(G))$ as $\mathcal{V}\cup\mathcal{E}$ where $\mathcal{V}$ is the set of original vertices of $G$ and $\mathcal{E}$ is the set of subdividing vertices, one for each edge. Then we have isomorphisms
$$T(G)[\mathcal{V}]=G,\quad T(G)[\mathcal{E}]=L(G)$$
and we see that the inclusions
$$\cl(L(G))=\cl(T(G))[\mathcal{E}]\hookrightarrow\ \cl(T(G))\ \hookleftarrow \cl(T(G))[\mathcal{V}]=\cl(G)$$
make $\cl(T(G))$ a subcomplex of the join $\cl(G)\ast\cl(L(G))$.

Denote by $\te(G)$ the number of triangles in $G$. Then we have the following result.
\begin{theorem}
\label{thm:subdiv}
For any graph $G$ there is a homotopy equivalence
$$\cl(T(G))\htpyequiv \cl(G)\vee\bigvee^{\te(G)}S^2.$$
\end{theorem}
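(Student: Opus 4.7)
The plan is to combine a discrete Morse argument with a direct null-homotopy computation. First I would classify the non-empty simplices of $\cl(T(G))$ via the partition $V(T(G)) = \mathcal{V} \disunion \mathcal{E}$. Using $T(G)[\mathcal{V}] = G$, $T(G)[\mathcal{E}] = L(G)$, and the observation that any clique of $T(G)$ containing two distinct $u, v \in \mathcal{V}$ together with an edge-vertex $\epsilon \in \mathcal{E}$ forces $\epsilon$ to subdivide $uv$, every non-empty face $\sigma$ of $\cl(T(G))$ is exactly one of the following types, writing $E_v := \{\epsilon \in \mathcal{E} : v \in \epsilon\}$: (i) a face of $\cl(G) = \cl(T(G))[\mathcal{V}]$; (ii) $\{u, v, \epsilon\}$ where $\epsilon$ subdivides $uv \in E(G)$; (iii) $\{v\} \cup S$ with $v \in \mathcal{V}$ and $\emptyset \neq S \subseteq E_v$; (iv) $S \subseteq E_v$ for some $v$; or (v) the 2-simplex $\{ab, bc, ca\}$ arising from a triangle $abc$ of $G$.

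Next, fix a total order on $V(G)$ and define an acyclic matching $M$ on $\cl(T(G))$ by the following pairs: for each $\epsilon \in \mathcal{E}$ subdividing $uv \in E(G)$ with $u < v$, take $(\{u, \epsilon\}, \{u, v, \epsilon\})$ and $(\{\epsilon\}, \{v, \epsilon\})$; for each $v \in \mathcal{V}$ and each $S \subseteq E_v$ with $|S| \geq 2$, take $(S, \{v\} \cup S)$. A case-by-case check confirms that the pairs are disjoint (uniqueness of the common endpoint when $|S| \geq 2$ makes $v$ well-defined; the order $<$ disambiguates in the size-one case), that $M$ is acyclic (from any face-matched cell the Morse flow reaches at most one further face-matched cell before terminating at an unmatched or merely coface-matched cell), and that the unmatched faces are precisely the faces of $\cl(G)$ together with the type-(v) 2-simplices. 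Setting $K' := \cl(T(G)) \setminus \{\{ab, bc, ca\} : abc \text{ triangle of } G\}$, the matching $M$ restricts to an acyclic matching on $K'$ whose critical faces form exactly the subcomplex $\cl(G)$. By the Fact from Section \ref{sec:largegirth}, $K'$ simplicially collapses onto $\cl(G)$, so $K' \htpyequiv \cl(G)$.

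Since $\cl(T(G))$ is obtained from $K'$ by attaching, for each triangle $abc$ of $G$, one 2-cell along the 1-cycle $\partial\{ab, bc, ca\} = \{ab, bc\} + \{bc, ca\} + \{ab, ca\}$, it suffices to show each attaching cycle is null-homotopic in $K'$; this will force every attached 2-cell to produce a wedge with a 2-sphere, yielding $\cl(T(G)) \htpyequiv K' \vee \bigvee^{\te(G)} S^2 \htpyequiv \cl(G) \vee \bigvee^{\te(G)} S^2$. For a fixed triangle $abc$, the three type-(iii) 2-simplices $\{b, ab, bc\}$, $\{c, bc, ca\}$, $\{a, ca, ab\}$, all of which lie in $K'$, deform the attaching cycle to the hexagonal cycle $a - ab - b - bc - c - ca - a$ inside $S(G) \subseteq K'$. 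The three type-(ii) 2-simplices $\{a, b, ab\}$, $\{b, c, bc\}$, $\{a, c, ca\}$ then deform this hexagon to the triangular cycle $a - b - c - a$ inside $\cl(G) \subseteq K'$. Finally, because $abc$ is itself a triangle of $G$, the 2-simplex $\{a, b, c\} \in \cl(G) \subseteq K'$ bounds this last cycle, completing the null-homotopy. The main obstacle is executing this null-homotopy strictly within $K'$, using only 2-simplices not of the removed type (v); the availability of the 2-simplex $\{a, b, c\}$ in $\cl(G)$ is precisely what makes the final step go through.
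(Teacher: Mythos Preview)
Your proof is correct and follows essentially the same strategy as the paper: remove the $\te(G)$ ``triangle'' $2$-cells $\{ab,bc,ca\}$ to obtain a subcomplex $K'$, collapse $K'$ to $\cl(G)$ via an acyclic matching, and then observe that each removed cell is attached along a loop that is null-homotopic in $K'$ (being homotopic to the boundary of the face $\{a,b,c\}\in\cl(G)$). The only cosmetic difference is that you use a total order on $V(G)$ to collapse $K'$ directly to $\cl(G)$, whereas the paper first collapses to an intermediate complex $\cl(G)\cup\{\{e\},\{u,e\},\{v,e\},\{u,v,e\}:e=uv\}$ and then to $\cl(G)$; your explicit chain of $2$-simplices realising the null-homotopy is also spelled out in more detail than in the paper.
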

This is another way in which a given complex can be represented as a clique complex of a graph square up to homotopy and up to a number of $2$-spheres. As a byproduct of the proof method we also obtain the next result. Recall that $K^{(2)}$ denotes the $2$-dimensional skeleton of $K$.
\begin{theorem}
\label{cor:wedgess2}
For any non-discrete, connected graph $G$ 
$$\cl(L(G))\htpyequiv \cl(G)^{(2)}.$$
\end{theorem}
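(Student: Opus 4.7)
My plan is to identify $\cl(L(G))$ as a CW complex obtained by gluing one $2$-cell onto a subcomplex $X$ for each triangle of $G$, to apply the nerve lemma to compute $X\htpyequiv G$, and then to compare the resulting attaching maps to those of $\cl(G)^{(2)}$.

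First I would classify the cliques of $L(G)$: every maximal clique is either a star $E(v)=\{e\in E(G):v\in e\}$ for some vertex $v$ of $G$, or the three-element set of edges of a triangle $T$ of $G$. Indeed, if three edges of $G$ pairwise share vertices but have no common vertex, a direct check forces them to be the edges of a triangle; and any $k\geq 4$ pairwise intersecting edges must have a common vertex. Let $\sigma_v$ be the simplex of $\cl(L(G))$ on $E(v)$ and $\Delta_T$ the $2$-simplex on the three edges of $T$. Setting $X=\bigcup_v\sigma_v$, the classification gives
$$\cl(L(G))\;=\;X\;\cup\;\bigcup_T\Delta_T,$$
where each $\Delta_T$ is attached to $X$ along its boundary circle $\partial\Delta_T$ (whose three edges lie in $\sigma_u$, $\sigma_v$, $\sigma_w$ for $T=\{u,v,w\}$).

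Next I would apply the nerve lemma to the cover $\{\sigma_v\}_{v\in V(G)}$ of $X$. Each $\sigma_v$ is a simplex, nonempty because $G$ is connected and non-discrete. For $k\geq 2$ the intersection $\sigma_{v_1}\cap\cdots\cap\sigma_{v_k}$ is the set of edges of $G$ containing all $v_i$: a single vertex $\{v_1v_2\}$ if $k=2$ and $v_1v_2\in E(G)$, and empty otherwise. Since every nonempty intersection is contractible, the nerve lemma gives $X\htpyequiv G$. Moreover, the equivalence can be realized explicitly by a map $\pi\colon X\to G$ sending each vertex $e=uv$ of $X$ to the midpoint of the edge $uv$ of $G$, and each $1$-cell of $X$ lying in $\sigma_v$ to the broken path through $v$ joining the relevant midpoints.

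To conclude, both $\cl(L(G))$ and $\cl(G)^{(2)}$ are obtained by attaching a $2$-cell for each triangle $T$ of $G$: in the first case to $X$ along $\partial\Delta_T$, in the second to $G$ along the triangle boundary $u\to v\to w\to u$. Under $\pi$ the loop $\partial\Delta_T$ maps to the hexagonal loop $m_{uv}\to u\to m_{uw}\to w\to m_{vw}\to v\to m_{uv}$ in $G$, which traces the same set of points as $\partial T$ and is hence homotopic to it in $G$. Since attaching $2$-cells along homotopic maps to homotopy equivalent spaces produces homotopy equivalent spaces, we obtain $\cl(L(G))\htpyequiv\cl(G)^{(2)}$. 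The most delicate step is this last compatibility check, which the explicit description of $\pi$ makes routine; a clean alternative would be a discrete Morse pairing inside each star $\sigma_v$ that collapses $X$ onto the underlying graph $G$ while leaving every $\Delta_T$ intact.
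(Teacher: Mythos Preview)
Your argument is correct and follows the same overall plan as the paper: decompose $\cl(L(G))$ as a ``base'' $X=\bigcup_v\sigma_v$ together with one $2$-cell $\Delta_T$ per triangle, identify $X\htpyequiv G$, and then compare attaching maps. The difference lies in how the equivalence $X\htpyequiv G$ is obtained. You invoke the nerve lemma on the cover $\{\sigma_v\}$ and then build an explicit map $\pi$ on the $1$-skeleton to control $[\partial\Delta_T]$. The paper instead enlarges $\cl(L(G))$ to a complex $K\subset\cl(T(G))$ by adding each original vertex $v$ as a cone point over $\sigma_v$ (this does not change the homotopy type since $\lk_K(v)=\sigma_v$ is a simplex), and then uses a discrete Morse matching to collapse $K'=K\setminus\{\Delta_T\}$ onto the subdivision $S(G)\homeo G$; the attaching map of $\Delta_T$ is then visibly homotopic in $K'$ to the subdivided triangle $S(t)\subset S(G)$. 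The paper's detour through $T(G)$ makes the attaching-map comparison take place inside a fixed complex where everything is a subcomplex, avoiding the need to construct and justify $\pi$; your route is more direct but puts the weight on that explicit map.

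One small correction: your proposed ``clean alternative'' of a Morse pairing collapsing $X$ onto $G$ cannot work as written, since $G$ is not a subcomplex of $X$ (the vertices of $X$ are edges of $G$). It becomes precisely the paper's argument once you first cone in the original vertices, i.e.\ pass to $K\subset\cl(T(G))$.
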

Both theorems depend on a simple classification.
\begin{lemma}
\label{lem:cliquesinsd}
Every maximal face in $\cl(T(G))$ is of one of the following forms:
\begin{itemize}
\item[a)] a maximal face of dimension at least $2$ in $\cl(G)$,
\item[b)] $\{v,e,w\}$ where $v,w$ are vertices of $G$ and $e=vw$,
\item[c)] $\{v,e_1,\ldots,e_k\}$ where $e_i$ are the edges incident with a vertex $v$ of $G$,
\item[d)] $\{e_1,e_2,e_3\}$ where $e_1,e_2,e_3$ are edges forming a triangle in $G$.
\end{itemize}
\end{lemma}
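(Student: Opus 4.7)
The plan is to first unwind the structure of $T(G)=S(G)^2$. In $S(G)$, only vertex-edge pairs (with the vertex an endpoint of the edge) are adjacent. Squaring gives three types of edges in $T(G)$: two original vertices $v,w\in\mathcal{V}$ become adjacent precisely when $vw\in E(G)$ (they are at $S(G)$-distance $2$ through the subdividing vertex); an original vertex $v\in\mathcal{V}$ and a subdivision vertex $e\in\mathcal{E}$ are adjacent iff $v$ is an endpoint of $e$; and two subdivision vertices $e_1,e_2\in\mathcal{E}$ are adjacent iff the edges $e_1,e_2$ share an endpoint in $G$. This also re-confirms the identifications $T(G)[\mathcal{V}]=G$ and $T(G)[\mathcal{E}]=L(G)$.

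Next I would take an arbitrary maximal clique $K$ of $T(G)$ and split it as $K=A\sqcup B$ with $A=K\cap\mathcal{V}$ and $B=K\cap\mathcal{E}$. From the description of edges: $A$ must be a clique in $G$, $B$ must be a clique in $L(G)$, and every $e\in B$ must have every $v\in A$ as one of its two endpoints. The proof then proceeds by a short case analysis on $(|A|,|B|)$.

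The case split is as follows. If $B=\emptyset$ and $|A|\geq 3$, we get case (a); the possibility $|A|=2$ is ruled out because one could add the subdivision vertex of the joining edge, contradicting maximality, and $|A|\leq 1$ forces $v$ to be isolated in $G$, which fits in case (c) with $k=0$. If $B\neq\emptyset$ and $|A|\geq 2$, the endpoint constraint forces $|A|=2$, say $A=\{v,w\}$, and every element of $B$ is the edge $vw$; so $K=\{v,e,w\}$ with $e=vw$, which is case (b). If $|A|=1$, say $A=\{v\}$, then $B$ is a set of edges of $G$ all containing $v$, and maximality forces $B$ to consist of \emph{all} such edges, giving case (c). Finally, if $A=\emptyset$, then $K=B$ is a maximal clique of $L(G)$, and the key point will be to appeal to the classical Whitney classification: cliques in $L(G)$ are either stars (edges sharing a common endpoint) or triangles of $G$. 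A star clique centred at $v$ can be extended by adding $v\in\mathcal{V}$ to the $T(G)$-clique, contradicting maximality; so $B$ must be a triangle of $G$, which is case (d).

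The only step that is not entirely routine is the classification of cliques in $L(G)$, but this is a short self-contained exercise: given pairwise intersecting edges, either they all meet at a common vertex, or three of them already form a triangle and no fourth edge can simultaneously meet all three. I would either quote this as Whitney's theorem or prove it inline in one or two lines. Apart from this, the argument is just bookkeeping, and the main thing to watch is the degenerate cases (isolated vertices, $|A|=2$ with no edge picked up) to verify that every maximal clique genuinely falls into one of the four listed forms.
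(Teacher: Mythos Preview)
Your proposal is correct and follows essentially the same approach as the paper: both arguments unwind the adjacency structure of $T(G)$ and then perform a case analysis on the number of original vertices in a maximal clique, with the $|A|=0$ case reducing to the classification of cliques in $L(G)$. Your write-up is somewhat more explicit about degenerate cases (isolated vertices, the Whitney-type argument) where the paper simply says ``this easily implies $k=3$'', but the underlying reasoning is the same.
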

\begin{proof}
Let $\sigma$ be a maximal face in $\cl(T(G))$. If $\sigma$ contains at least three vertices of $\mathcal{V}$ then those vertices form a clique in $G$ and no edge is incident with all of them, so it cannot be extended by a vertex of $\mathcal{E}$. If $\sigma$ contains precisely two vertices $v,w$ of $\mathcal{V}$, then $e=vw$ is the only vertex of $\mathcal{E}$ adjacent to both of them. If $|\sigma\cap\mathcal{V}|=\{v\}$ then $\sigma$ must be of the form c). Eventually if $\sigma=\{e_1,\ldots,e_k\}\in\cl(L(G))$ then not all of $e_i$ are incident with a common vertex, but every two $e_i$, $e_j$ have a common vertex. This easily implies $k=3$.
\end{proof}

\begin{proof}[Proof of Theorem \ref{thm:subdiv}]
Consider the subcomplex $K\subset \cl(T(G))$ consisting of all faces of $\cl(T(G))$ which are not of the form $\{e_1,e_2,e_3\}$ for some three edges forming a triangle in $G$. Consider a matching on $K$ defined as follows
\begin{itemize}
\item the faces of $\cl(G)$ are unmatched,
\item for each edge $e=uv$ the faces $\{e\}$, $\{v,e\}$, $\{e,u\}$ and $\{v,e,u\}$ are unmatched,
\item for every face $\sigma\in K\cap\cl(L(G))$ of dimension at least one there exists a unique vertex $v\in G$ such that $\sigma\cup\{v\}$ is a face of $K$ (that vertex is the common end of the edges of $\sigma$). In such case match $\sigma$ with $\sigma\cup\{v\}$.
\end{itemize}
This is clearly an acyclic matching on $K$ in the sense of Definition \ref{acyclic}. Its critical faces form the subcomplex 
$$K'=\cl(G)\cup\{\{e\},\{v,e\},\{e,u\},\{v,e,u\}\ \textrm{for}\ e=uv\in\mathcal{E}\}.$$
This $K'$ easily collapses to $\cl(G)$, therefore also $K$ collapses to $\cl(G)$.

Now $\cl(T(G))$ arises from $K$ by attaching the $\te(G)$ cells $\{e_1,e_2,e_3\}$ for all triangles $\{v_1,v_2,v_3\}$ of $G$. The attaching map of every such cell is homotopic in $K$ to the boundary of the face $\{v_1,v_2,v_3\}$ of $\cl(G)$, therefore it is null-homotopic. It follows that $\cl(T(G))\htpyequiv K\vee \bigvee^{\te(G)}S^2 \htpyequiv \cl(G) \vee \bigvee^{\te(G)}S^2$.
\end{proof}

\begin{proof}[Proof of Theorem \ref{cor:wedgess2}]
Let $K$ be the subcomplex of $\cl(T(G))$ consisting of all faces $\sigma$ such that $|\sigma\cap\mathcal{V}|\leq 1$. Then $K$ is the union of $\cl(L(G))$ and simplices of the form \ref{lem:cliquesinsd}.c) for every $v\in\mathcal{V}$. For each $v$ the link $\lk_K(v)\subset\cl(L(G))$ is contractible (because it is a simplex) hence the removal of $v$ from $K$ does not change the homotopy type. It means that $K\htpyequiv\cl(L(G))$.

Let $K'\subset K$ be obtained from $K$ by removing the maximal faces $\{e_1,e_2,e_3\}$ corresponding to triangles of $G$. Then $K'$ is collapsible to the graph $S(G)$ by an acyclic matching argument identical to that used in \ref{thm:subdiv}, pairing $\sigma$ with $\sigma\cup\{v\}$ for any set $\sigma$ of at least two elements of $\mathcal{E}$ and their common endpoint $v$. Note that $S(G)$ and $G$ are homeomorphic as spaces.  

Now $K$ is recovered from $K'$ by attaching a $2$-face $\{e_1,e_2,e_3\}$ for every triangle $t$ of $G$. The attaching map is homotopic in $K'$ to the inclusion of $S(t)$ in $S(G)$. It follows that $K$ is homotopy equivalent to $G$ with a $2$-cell attached along every triangle. This is precisely $\cl(G)^{(2)}$. It follows that
$$\cl(L(G))\htpyequiv K\htpyequiv K'\cup\coprod^{t(G)}\Delta^2\Big/_\sim \htpyequiv \cl(G)^{(2)}.$$
\end{proof}

\begin{example}
The \emph{stable Kneser graph} $SG_{n,k}$ is a graph whose vertices are the $n$-element subsets of $\{1,\ldots,k+2n\}$ which do not contain two consecutive (in the cyclic sense) elements. One of the goals of \cite{Braun} is to calculate the homotopy types of the independence complexes $\ind(SG_{2,k})$. Since the complex $\ind(SG_{2,k})$ is exactly $\cl(L(\compl{C_{k+4}}))$, Theorem \ref{cor:wedgess2} identifies it, up to homotopy, with $\ind(C_{k+4})^{(2)}$. This explains why these space are homotopically at most two-dimensional, as stated in \cite[Thm.1.4]{Braun}.
\end{example}

\begin{remark}
From the two theorems of this section we immediately recover the result of \cite[Cor. 5.4]{LPV}, which is that the spaces $\cl(G)$, $\cl(T(G))$ and $\cl(L(G))$ have isomorphic fundamental groups.
\end{remark}

\section{Clique complexes of powers of cycles}
\label{sec:cyclepowers}

In this section we determine the homotopy types of the clique complexes of the graphs $C_n^r$, i.e. the powers of cycles. It follows from Proposition \ref{prop:largegirth} that for $1\leq r\leq\frac{n-1}{3}$ the complex $\cl(C_n^r)$ collapses to $\cl(C_n)\htpyequiv S^1$. On the other hand, for $r\geq\lfloor\frac{n}{2}\rfloor$ the complex $\cl(C_n^r)=\cl(K_n)$ is contractible. The intermediate values for some small pairs $n$, $r$ are shown in Section \ref{sect:tableofcyc}. The purpose of this section is to exhibit a systematic pattern in that table. It turns out to be best expressed in terms of the independence complexes of the complements of $C_n^r$. These results may also be interesting on their own right as one way of generalizing the calculation of Kozlov \cite{Koz2} of the homotopy types of $\ind(C_n)$.

For any pair of non-negative integers $n$, $k$ \emph{of opposite parity} and with $1\leq k\leq n-1$ let $T_{n,k}$ denote the graph obtained by connecting every vertex of the regular $n$-gon with the $k$ ``most opposite'' vertices. The notion of ``most opposite'' is well defined if $n$ and $k$ have opposite parity. For example, $T_{n,1}$ is the disjoint union of $\frac{n}{2}$ edges and examples of $T_{n,2}$ and $T_{n,3}$ are shown in Fig.\ref{fig:t9}a and Fig.\ref{fig:transf}a. To describe these graphs we are also going to use another parameter $r=r(n,k)=\frac{n-k-1}{2}$. Of course
\begin{equation*}
T_{n,k}=\compl{C_n^r}.
\end{equation*}
The graphs $T_{n,k}$ are called \emph{circular complete graphs} and form a subclass of \emph{circulant graphs}. The usual notation for $T_{n,k}=\compl{C^r_n}$ is 
$$K_{n/r+1} \quad \textrm{or}\quad C_n(r+1,\ldots,\lfloor\frac{n}{2}\rfloor)$$
but we will keep using the notation $T_{n,k}$ which is more intuitive for this application. For information about circulant and circular complete graphs and their independent sets see e.g. \cite{BPT,Ho2,Ho,Muz}.

We will identify the vertices of $T_{n,k}$ with $\zet/n$. Under this identification each vertex $i$ is connected to the vertices in the set
\begin{equation}
N_{T_{n,k}}(i)=\{i+r+1,\ldots,i+r+k\}\mod n.
\end{equation}
We are also going to need another auxiliary graph $S_{n,k}$. It is the induced subgraph of $T_{n,k}$ on the vertex set
\begin{equation}
V=\{1,\ldots,r\}\cup\{-1,\ldots,-r\}
\end{equation}
equipped \emph{additionally} with the edges $(-i,j)$ for all pairs $i,j\in\{1,\ldots,k-1\}$ such that $i+j\leq k$. For examples of $S_{n,k}$ see Fig.\ref{fig:t9}b and Fig.\ref{fig:transf}b.

The main results that lead to the calculation of $\cl(C_n^r)$ are the following propositions. Recall that $\susp$ denotes the unreduced suspension.

\begin{proposition}
\label{prop1}
If $n\geq 3k-1$ then
\begin{equation*}
\ind(T_{n,k})\htpyequiv\susp\ind(S_{n,k}).
\end{equation*}
\end{proposition}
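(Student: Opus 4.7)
The plan is to apply Barmak's star cluster theorem from \cite{Bar} to decompose $\ind(T_{n,k})$ as a union of two contractible subcomplexes whose intersection is isomorphic to $\ind(S_{n,k})$; the desired suspension then follows from the standard fact that a union $K = A \cup B$ of two contractible simplicial subcomplexes with $A \cap B$ nonempty is homotopy equivalent to $\susp(A\cap B)$.

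First I would observe that independent sets of $T_{n,k}=\compl{C_n^r}$ are precisely the cliques in $C_n^r$, i.e.\ subsets of $\zet/n$ that fit into an arc of at most $r+1$ consecutive vertices, with the maximal faces of $\ind(T_{n,k})$ being arcs of exactly $r+1$ consecutive vertices. The hypothesis $n\geq 3k-1$ is equivalent to $r\geq k-1$, and this guarantees that the set $\sigma := N_{T_{n,k}}(0) = \{r+1,\ldots,r+k\}$ is an arc of length $k-1\leq r$, hence an independent set and a simplex of $\ind(T_{n,k})$.

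Set $A = \st(0,\ind(T_{n,k}))$, which is contractible as a simplicial cone, and $B = \bigcup_{v\in\sigma}\st(v,\ind(T_{n,k}))$, which is contractible by the star cluster theorem. Next I would check that $A \cup B = \ind(T_{n,k})$: the set $\{0\}\cup\sigma = \{0,r+1,\ldots,r+k\}$ splits $\zet/n$ into two gap arcs of exactly $r$ consecutive vertices, so no arc of $r+1$ consecutive vertices can avoid $\{0\}\cup\sigma$. Equivalently, every face $F$ of $\ind(T_{n,k})$ extends by some $v\in\{0\}\cup\sigma$, placing $F$ in $A$ or in $B$.

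The key technical step, which I expect to demand most of the effort, is to identify $A\cap B$ with $\ind(S_{n,k})$. Using the explicit description $N_{T_{n,k}}(r+a)=\{-(k-a),\ldots,a-1\}$ for $a\in\{1,\ldots,k\}$ — each of these arcs contains $0$ — one sees that $F\in B$ forces $0\notin F$. Then $F \in A\cap B$ iff $F\subseteq V = \{\pm 1,\ldots,\pm r\}$, $F$ is independent in $T_{n,k}[V]$, and $F$ misses at least one window $\{-(k-a),\ldots,a-1\}$. Writing $\alpha$ for the smallest positive element of $F$ and $\beta$ for the smallest $|j|$ with $-j\in F$ (each set to $+\infty$ if the corresponding side is empty), a short search over pairs $(b,c) = (a-1, k-a)$ with $b+c = k-1$ shows that the window-avoidance condition is equivalent to $\alpha+\beta\geq k+1$. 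This inequality forbids exactly the pairs $\{i,-j\}$ with $i,j\geq 1$ and $i+j\leq k$, which are precisely the extra edges added to $T_{n,k}[V]$ in the definition of $S_{n,k}$. Hence $A\cap B = \ind(S_{n,k})$, and the gluing lemma yields $\ind(T_{n,k})\htpyequiv \susp \ind(S_{n,k})$. The main obstacle is the parameter bookkeeping in this last computation; once the neighbourhoods $N_{T_{n,k}}(r+a)$ are written down, the combinatorial condition matches the definition of $S_{n,k}$ on the nose.
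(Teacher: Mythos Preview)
Your proposal is correct and follows essentially the same route as the paper: apply Barmak's star cluster theorem at the vertex $0$ of $T_{n,k}$ and then identify the intersection $\st(0)\cap\bigcup_{w\in N(0)}\st(w)$ with $\ind(S_{n,k})$. The only cosmetic differences are that you verify the covering $A\cup B=\ind(T_{n,k})$ and the contractibility of $B$ separately (the paper absorbs these into the black-box statement of Barmak's theorem), and that you carry out the identification with $\ind(S_{n,k})$ directly via the $\alpha+\beta\geq k+1$ criterion, whereas the paper packages this step into the general Lemma~\ref{technicallemma} applied with $d=k-1$ and the sequence $(-(k-1),\ldots,-1,1,\ldots,k-1)$.
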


\begin{proposition}
\label{prop2}
If $n\geq 3k+3$ then
\begin{equation*}
\ind(S_{n,k})\htpyequiv\susp\ind(T_{n-2(k+1),k}).
\end{equation*}
\end{proposition}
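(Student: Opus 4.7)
The strategy is to apply Barmak's star cluster technique~\cite{Bar}. The key structural observation is that $S_{n,k}$ is bipartite with parts $V^+=\{1,\ldots,r\}$ and $V^-=\{-1,\ldots,-r\}$: both the Type~A edges $(-i,j)$ with $i+j\in\{r+1,\ldots,r+k\}$ (inherited from $T_{n,k}$) and the extra Type~B edges $(-i,j)$ with $i+j\leq k$ cross the bipartition. Thus $V^+$ and $V^-$ are independent sets in $S_{n,k}$, hence simplices of $\ind(S_{n,k})$, and the star clusters
$$A=\bigcup_{v\in V^+}\st(v), \qquad B=\bigcup_{v\in V^-}\st(v)$$
are contractible by Barmak's theorem. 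Every nonempty face of $\ind(S_{n,k})$ contains a positive or a negative vertex, so $A\cup B=\ind(S_{n,k})$. The standard gluing lemma for a union of two contractible subcomplexes then yields $\ind(S_{n,k})\htpyequiv\susp(A\cap B)$, reducing the proposition to showing $A\cap B\htpyequiv\ind(T_{n-2(k+1),k})$.

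The main step is to analyze the intersection. A face $F$ lies in $A\cap B$ iff $F$ simultaneously admits a positive and a negative extension; equivalently, there exists a bipartite non-edge $\{j,-i\}$ of $S_{n,k}$ with $F\cup\{j,-i\}\in\ind(S_{n,k})$. The bipartite non-edges of $S_{n,k}$ naturally split into ``low'' ones with $i+j\in\{k+1,\ldots,r\}$ and ``high'' ones with $i+j\in\{r+k+1,\ldots,2r\}$; their cyclic incidence pattern, together with the interplay of Type~A and Type~B edges, encodes the edge structure of the smaller circular complete graph $T_{n-2(k+1),k}$. The equivalence $A\cap B\htpyequiv\ind(T_{n-2(k+1),k})$ would then be produced through explicit combinatorial analysis of this structure, for instance by constructing an acyclic matching (Definition~\ref{acyclic}) or a sequence of simplicial folds that reduces $A\cap B$ to $\ind(T_{n-2(k+1),k})$.

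The principal obstacle is precisely this identification of $A\cap B$. The intersection is a subtle subcomplex, containing all mixed independent sets together with one-sided sets possessing a ``compatible'' vertex on the opposite side, and its homotopy type depends delicately on the Type~A/Type~B edge structure of $S_{n,k}$. The hypothesis $n\geq 3k+3$ (equivalently $r\geq k+1$) is essential both to ensure $T_{n-2(k+1),k}$ is well defined and to guarantee enough bipartite non-edges for the identification to be informative; I would verify the equivalence first in the boundary case $r=k+1$, where $T_{n-2(k+1),k}=K_{k+1}$ and both sides reduce to a discrete complex on $k+1$ points, before extending to the general case.
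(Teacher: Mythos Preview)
Your bipartiteness observation is correct and gives a legitimate alternative entry point: $S_{n,k}$ really is bipartite on $V^+\sqcup V^-$, so Barmak's star-cluster contractibility applies to both $A$ and $B$, and the gluing argument does yield $\ind(S_{n,k})\htpyequiv\susp(A\cap B)$. But the proof as written has a genuine gap: the identification $A\cap B\htpyequiv\ind(T_{n-2(k+1),k})$ is stated only as a plan, not carried out, and you yourself flag it as ``the principal obstacle.'' This identification is essentially the whole content of the proposition. Note also that $A\cap B$ has all $2r$ vertices of $S_{n,k}$ (every vertex has a non-neighbour on the other side once $r\geq k+1$), whereas $T_{n-2(k+1),k}$ has only $2r-k-1$ vertices; so you are forced to establish a nontrivial homotopy equivalence between complexes of different sizes, and no concrete mechanism for doing so is supplied beyond ``an acyclic matching \ldots\ or a sequence of simplicial folds.''

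The paper's route is different and avoids this difficulty. Instead of the bipartite split it applies Barmak's theorem in the single-vertex form (Theorem~\ref{barmakthm}) at the vertex $-1$, whose neighbourhood $\{1,\ldots,k-1\}\cup\{r\}$ is independent. The resulting complex $K$ is supported on exactly the $2r-k-1$ vertices $\{k,\ldots,r-1\}\cup\{-r,\ldots,-2\}$, and a short combinatorial device (Lemma~\ref{technicallemma}) identifies $K$ \emph{isomorphically} with $\ind(H)$ for an explicit graph $H$; a direct ``close the two gaps of length $k+1$'' argument then recognizes $H$ as $T_{n-2(k+1),k}$. So the paper never has to collapse anything: the target independence complex appears on the nose. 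If you want to push your bipartite approach through, the missing work is to exhibit $k+1$ redundant vertices of $A\cap B$ and show that removing them leaves precisely $\ind(T_{n-2(k+1),k})$; absent that, the argument is incomplete.
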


\begin{figure}
\begin{tabular}{cc}\includegraphics[scale=0.5]{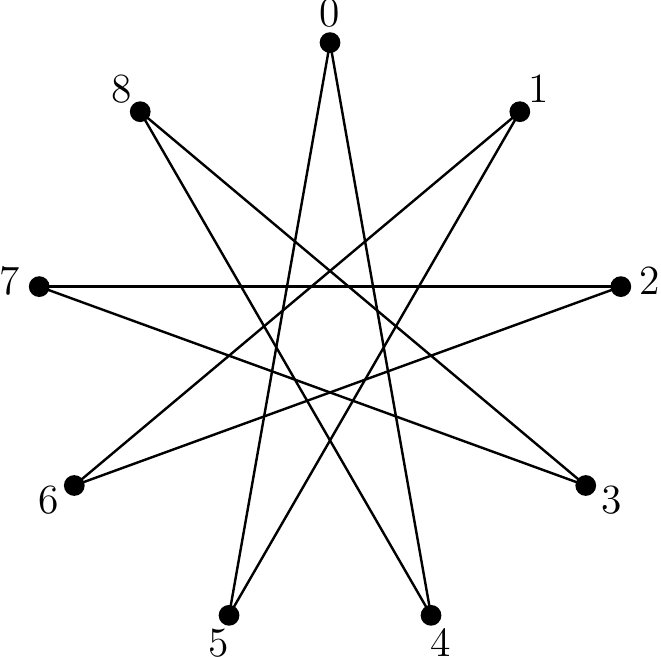} &\includegraphics[scale=0.5]{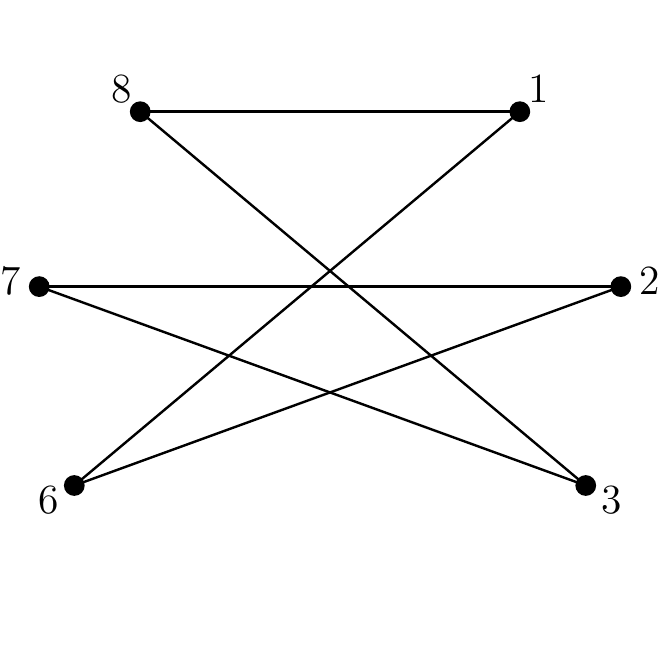} \\
a) & b) \end{tabular}
\caption{a) $T_{9,2}=\compl{C_9^3}$. b) $S_{9,2}$.}
\label{fig:t9}
\end{figure}

\begin{example}
Consider the special case $k=2$. We have $T_{n,2}=C_n$ for every odd $n\geq 3$ and $S_{n,2}=C_{n-3}$ for every odd $n\geq 7$. The previous two propositions thus combine to the statement \begin{equation*}\ind(C_m)\htpyequiv\susp\ind(C_{m-3}) \qquad \textrm{ for all } m\geq 6.\end{equation*}
Moreover $\ind(C_3)\homeo S^0\vee S^0$, $\ind(C_4)\htpyequiv S^0$ and $\ind(C_5)=\cl(\compl{C_5})=\cl(C_5)\htpyequiv S^1$ so it follows by induction that for all $m\geq 1$
\begin{equation*}
\ind(C_{3m})\htpyequiv S^{m-1}\vee S^{m-1}, \quad \ind(C_{3m+1})\htpyequiv S^{m-1}, \quad  \ind(C_{3m+2})\htpyequiv S^{m}.
\end{equation*}
This was first established by Kozlov \cite{Koz2} and then reproved in a number of ways.
\end{example}

\begin{corollary}
\label{cor:susp2}
If $n\geq 3k+3$ then 
\begin{equation*}
\ind(T_{n,k})\htpyequiv\susp^2\ind(T_{n-2(k+1),k}).
\end{equation*}
\end{corollary}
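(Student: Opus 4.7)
The corollary is an essentially immediate consequence of Propositions \ref{prop1} and \ref{prop2}, so the plan is just to compose them and verify that both applications are legitimate under the stated hypothesis.

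First I would observe that the hypothesis $n\geq 3k+3$ is strictly stronger than the hypothesis $n\geq 3k-1$ needed to invoke Proposition \ref{prop1}, so both propositions apply simultaneously to the given $n$ and $k$. Applying Proposition \ref{prop1} gives
\begin{equation*}
\ind(T_{n,k})\htpyequiv \susp \ind(S_{n,k}).
\end{equation*}
Next, applying Proposition \ref{prop2} (whose hypothesis $n\geq 3k+3$ is exactly the hypothesis of the corollary) gives
\begin{equation*}
\ind(S_{n,k})\htpyequiv \susp \ind(T_{n-2(k+1),k}).
\end{equation*}

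Then I would use the elementary fact that the (unreduced) suspension functor $\susp=S^0\simpjoin(-)$ preserves homotopy equivalences, so applying $\susp$ to the second equivalence and chaining with the first yields
\begin{equation*}
\ind(T_{n,k})\htpyequiv \susp\ind(S_{n,k})\htpyequiv \susp\susp\ind(T_{n-2(k+1),k})=\susp^2\ind(T_{n-2(k+1),k}),
\end{equation*}
which is exactly the conclusion of the corollary.

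There is essentially no obstacle at the level of this corollary itself: all the difficulty lies in establishing Propositions \ref{prop1} and \ref{prop2}, which presumably use the star cluster technique of \cite{Bar} alluded to in the introduction. The only thing one needs to be careful about is that the hypothesis $n\geq 3k+3$ keeps the parity condition consistent (both $n$ and $n-2(k+1)$ have the same parity, so if $n$ and $k$ have opposite parity then so do $n-2(k+1)$ and $k$, making $T_{n-2(k+1),k}$ well-defined), and that we remain in the range where subsequent iterations of the corollary can still be applied.
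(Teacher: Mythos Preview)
Your proposal is correct and matches the paper's approach exactly: the paper gives no separate proof for this corollary, treating it as an immediate consequence of Propositions~\ref{prop1} and~\ref{prop2}, which is precisely what you do. Your extra remarks on the hypothesis check and the parity consistency are sound and make the implicit step explicit.
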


\begin{corollary}
\label{cor:indepresult}
For any $1\leq k\leq n-1$, with $k$ and $n$ of opposite parity, we have
\begin{equation*}
\ind(T_{n,k})\htpyequiv\left\{ 
\begin{array}{ll}
\bigvee^k S^{2l} & \mathrm{if}\ n=(2l+1)(k+1) \\
S^{2l+1}         & \mathrm{if}\ (2l+1)<\frac{n}{k+1}<(2l+3) \\
\end{array}
\right. \mathrm{for\ some}\ l\geq 0.
\end{equation*}
\end{corollary}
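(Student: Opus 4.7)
My plan is to induct on $r = r(n,k) = (n-k-1)/2 \geq 0$. The key observation is that the two tools already at our disposal---Proposition \ref{prop:largegirth} applied to $G = C_n$ (whose girth equals $n$), and Corollary \ref{cor:susp2}---cover \emph{complementary} ranges of $n$. Using the relation $k+1 = n-2r$, a short calculation shows that the hypothesis $n \geq 3r+1$ of Proposition \ref{prop:largegirth} is equivalent to $n \leq 3k+1$ (i.e.\ $q = n/(k+1) < 3$), while the hypothesis $n \geq 3(k+1)$ of Corollary \ref{cor:susp2} is equivalent to $q \geq 3$, i.e.\ $n \geq 3k+3$. The opposite-parity condition on $n,k$ rules out the intermediate value $n=3k+2$, so these two ranges exactly partition the admissible $n$.

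The base case $r=0$ is immediate: $n = k+1$, every two vertices of $T_{k+1,k}$ are adjacent, so $T_{k+1,k} = K_{k+1}$, and $\ind(K_{k+1})$ is a set of $k+1$ discrete points, homotopy equivalent to $\bigvee^k S^0$. This agrees with the formula at $q=1$, $l=0$.

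For the inductive step ($r \geq 1$), if $n \leq 3k+1$ then the girth of $C_n$ is at least $3r+1$, and Proposition \ref{prop:largegirth} gives a collapse of $\ind(T_{n,k}) = \cl(C_n^r)$ onto $\cl(C_n) = C_n \htpyequiv S^1$, matching the second branch of the formula with $l=0$. If instead $n \geq 3k+3$, Corollary \ref{cor:susp2} delivers $\ind(T_{n,k}) \htpyequiv \susp^2 \ind(T_{n',k})$ for $n' = n - 2(k+1)$; the new parameter is $r' = r - (k+1)$, which is nonnegative and strictly smaller than $r$, so the inductive hypothesis applies. Because $q' = q - 2$, both branches of the formula transport correctly: an odd integer value $q = 2l+1$ drops to $q' = 2(l-1)+1$, with $\susp^2 \bigvee^k S^{2l-2} = \bigvee^k S^{2l}$, and an interval $q \in (2l+1, 2l+3)$ drops to $q' \in (2(l-1)+1, 2(l-1)+3)$, with $\susp^2 S^{2l-1} = S^{2l+1}$.

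The only delicate point is that the case split must be exhaustive, and this is precisely where the opposite-parity hypothesis is used, excluding $n = 3k+2$. Otherwise the argument is a bookkeeping combination of the two auxiliary reductions, and I do not expect any substantive obstacle.
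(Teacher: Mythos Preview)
Your proof is correct and follows essentially the same approach as the paper's: handle $n=k+1$ directly, use Proposition~\ref{prop:largegirth} for the range $k+2\leq n\leq 3k+1$, and apply Corollary~\ref{cor:susp2} inductively for $n\geq 3k+3$. Your explicit remark that the parity hypothesis excludes $n=3k+2$ and therefore makes the two ranges exhaustive is a useful clarification that the paper leaves implicit.
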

\begin{proof}
First we establish the result when $k+1\leq n\leq 3k+2$. If $n=k+1$ then
\begin{equation*}
\ind(T_{k+1,k})=\ind(K_{k+1})\homeo\bigvee^k S^0.
\end{equation*}
Now suppose that $k+2\leq n\leq 3k+2$. These inequalities imply that $r\geq 1$, $3r+1\leq n$ and $1<\frac{n}{k+1}<3$ so $l=0$. Since $n\geq 4$ by Proposition \ref{prop:largegirth} we get
\begin{equation*}
\ind(T_{n,k})=\cl(C_n^r)\htpyequiv S^1=S^{2l+1}
\end{equation*}
as required. For $n\geq 3k+3$ the result follows by induction using Corollary \ref{cor:susp2} because every increase of $n$ by $2(k+1)$ adds a double suspension to the homotopy type.
\end{proof}

These results can be transformed into statements about $\cl(C_n^r)$ by a straightforward calculation. Corollary \ref{cor:susp2} translates into:
\begin{corollary}
\label{cor:suspforcliques}
For any $\frac{n}{3}\leq r< \frac{n}{2}$
$$\cl(C_n^r)\htpyequiv\susp^2\cl(C_{4r-n}^{3r-n})=\susp^2\cl(C_{n-2\cdot (n-2r)}^{r-1\cdot (n-2r)}).$$
\end{corollary}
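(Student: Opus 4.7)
The plan is to simply translate Corollary \ref{cor:susp2} through the correspondence between the two parameterizations. Recall that the graphs $T_{n,k}$ and $C_n^r$ are related by $T_{n,k}=\compl{C_n^r}$ where $r=r(n,k)=\frac{n-k-1}{2}$, equivalently $k=n-2r-1$. Since $\ind(H)=\cl(\compl{H})$, this gives the basic identification
\begin{equation*}
\ind(T_{n,k})=\cl(C_n^r).
\end{equation*}
The parity hypothesis ``$n$ and $k$ of opposite parity'' in the definition of $T_{n,k}$ is automatic once $k=n-2r-1$, because $n+k=2n-2r-1$ is odd for every integer $r$.

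First I would verify that the hypotheses of Corollary \ref{cor:susp2} translate to the range $\frac{n}{3}\leq r<\frac{n}{2}$ in the corollary under consideration. Substituting $k=n-2r-1$, the condition $n\geq 3k+3$ becomes $n\geq 3(n-2r-1)+3=3n-6r$, that is $r\geq n/3$. The implicit condition $k\geq 1$ (which is needed for $T_{n,k}$ to be defined and non-trivial) becomes $n-2r-1\geq 1$, i.e.\ $r<n/2$. So the range $\frac{n}{3}\leq r<\frac{n}{2}$ exactly matches the hypothesis of Corollary \ref{cor:susp2}.

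Next I would compute what $T_{n-2(k+1),k}$ becomes in $(n,r)$-coordinates. Setting $n'=n-2(k+1)$ and $k'=k$, the corresponding value $r'=(n'-k'-1)/2$ is
\begin{equation*}
r'=\frac{(n-2(k+1))-k-1}{2}=\frac{n-3k-3}{2}=\frac{n-3(n-2r-1)-3}{2}=3r-n,
\end{equation*}
and $n'=n-2(n-2r-1+1)=n-2(n-2r)=4r-n$. Thus $T_{n-2(k+1),k}=\compl{C_{4r-n}^{\,3r-n}}$, so $\ind(T_{n-2(k+1),k})=\cl(C_{4r-n}^{\,3r-n})$.

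Putting the pieces together, Corollary \ref{cor:susp2} yields
\begin{equation*}
\cl(C_n^r)=\ind(T_{n,k})\htpyequiv\susp^2\ind(T_{n-2(k+1),k})=\susp^2\cl(C_{4r-n}^{\,3r-n}),
\end{equation*}
which is the desired conclusion; the alternative form $\susp^2\cl(C_{n-2(n-2r)}^{\,r-(n-2r)})$ is just a rewriting emphasizing that one step decreases $n$ by $2(n-2r)=2(k+1)$ and $r$ by $n-2r=k+1$. There is no real obstacle here: the only thing to check carefully is the arithmetic of the substitution $k\leftrightarrow r$ and the matching of the hypothesis ranges, which I have done above.
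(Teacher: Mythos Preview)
Your proposal is correct and is exactly the approach the paper intends: the paper states only that this corollary is obtained from Corollary~\ref{cor:susp2} ``by a straightforward calculation,'' and you have carried out precisely that substitution $k=n-2r-1$, checked that $n\geq 3k+3$ corresponds to $r\geq n/3$ and $k\geq 1$ to $r<n/2$, and verified $n'=4r-n$, $r'=3r-n$.
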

It follows that in the $(n,r)$-chart of the complexes $\cl(C_n^r)$ (see Section \ref{sect:tableofcyc}) the double suspension operator $\susp^2$ acts always along the lines of slope $(2,1)$. The translation of Corollary \ref{cor:indepresult} is:

\begin{corollary}
\label{cor:cliqueresult}
For any $n\geq 3$ and $0\leq r<\frac{n}{2}$ we have
\begin{equation*}
\cl(C_n^r)\htpyequiv\left\{ 
\begin{array}{ll}
\bigvee^{n-2r-1} S^{2l} & \mathrm{if}\ r=\frac{l}{2l+1}n \\
S^{2l+1}         & \mathrm{if}\ \frac{l}{2l+1}n<r<\frac{l+1}{2l+3}n \\
\end{array}
\right. \mathrm{for\ some}\ l\geq 0.
\end{equation*}
\end{corollary}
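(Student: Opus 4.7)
The plan is a direct translation from Corollary \ref{cor:indepresult} using the correspondence $T_{n,k}=\compl{C_n^r}$ established in the text, which immediately gives
$$\ind(T_{n,k}) = \cl(\compl{T_{n,k}}) = \cl(C_n^r),$$
where the parameters are related by $k = n-2r-1$, equivalently $k+1 = n-2r$. So the work reduces to rewriting the two cases of Corollary \ref{cor:indepresult} in terms of $(n,r)$ instead of $(n,k)$.

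First I would check that Corollary \ref{cor:indepresult} actually applies for every $(n,r)$ with $n\geq 3$ and $0\leq r<\frac{n}{2}$. The bound $1\leq k\leq n-1$ translates to $0\leq r\leq \frac{n-2}{2}$, which is within range, and the parity hypothesis is automatic: $n+k = 2n-2r-1$ is odd, so $n$ and $k$ have opposite parity. Next I would translate the two cases. The exceptional case $n=(2l+1)(k+1)$ becomes $n=(2l+1)(n-2r)$, which solves to $r=\frac{l}{2l+1}n$. The generic case $2l+1<\frac{n}{k+1}<2l+3$ becomes $2l+1<\frac{n}{n-2r}<2l+3$; since $n>n-2r>0$, this is elementarily equivalent to $\frac{l}{2l+1}n<r<\frac{l+1}{2l+3}n$. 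The wedge count $k$ in the exceptional case becomes $n-2r-1$, exactly as stated.

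The last thing to verify is that every $r$ with $0\leq r<n/2$ falls into exactly one of the described cases, i.e.\ that the singletons $\{\frac{l}{2l+1}n\}$ together with the open intervals $\left(\frac{l}{2l+1}n,\frac{l+1}{2l+3}n\right)$ for $l\geq 0$ partition $[0,n/2)$. This is immediate once one notes that the sequence $\frac{l}{2l+1}$ is strictly increasing with limit $\frac12$, and that the right endpoint $\frac{l+1}{2l+3}n$ of the $l$-th interval equals the left endpoint $\frac{(l+1)}{2(l+1)+1}n$ of the $(l{+}1)$-st.

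There is no real obstacle here: the homotopy content has already been established in Corollary \ref{cor:indepresult}, and the present corollary is a routine repackaging. The only point requiring care is consistent bookkeeping of the inequalities so that neither endpoint is accidentally double-counted and so that the substitution $k=n-2r-1$ is applied uniformly on both sides of the equivalences.
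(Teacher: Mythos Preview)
Your proposal is correct and is exactly the paper's approach: the paper simply records Corollary~\ref{cor:cliqueresult} as ``the translation of Corollary~\ref{cor:indepresult}'' without further detail, and you have written out that translation carefully. One small slip: the claim that $1\leq k\leq n-1$ covers the whole range $0\leq r<n/2$ fails when $n$ is odd and $r=(n-1)/2$, since then $k=0$; but in that case $C_n^r=K_n$, so $\cl(C_n^r)$ is a simplex and the formula returns $\bigvee^{0}S^{n-1}=\ast$, so the statement holds trivially there.
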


\begin{remark} The relevant value of $l$ for each pair $(n,r)$ is given by
\begin{equation*}
l=\lfloor\frac{r}{n-2r}\rfloor
\end{equation*}
\end{remark}

It remains to prove Propositions \ref{prop1} and \ref{prop2}. Our tool to analyze the homotopy types of $\ind(T_{n,k})$ and $\ind(S_{n,k})$ are the \emph{star clusters} introduced by J.Barmak \cite{Bar}. Let us recall the main result of that work.

\begin{theorem}[Barmak, \cite{Bar}]
\label{barmakthm}
Suppose $v$ is a non-isolated vertex of $G$ which does not belong to any triangle. Let $K$ be the subcomplex of $\ind(G)$ defined as
\begin{equation}
K=\st(v)\cap\bigcup_{w\in N_G(v)} \st(w)
\end{equation}
where all stars are taken in $\ind(G)$. Then there is a homotopy equivalence $\ind(G)\htpyequiv \susp K$.
\end{theorem}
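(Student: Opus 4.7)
The plan is to realize $\ind(G)$ as the union of two contractible subcomplexes whose intersection is exactly $K$, so that the standard homotopy pushout argument yields $\ind(G)\htpyequiv\susp K$. Set $SC(v)=\bigcup_{w\in N_G(v)}\st(w)$ (the \emph{star cluster} of $v$). First I would verify the two equalities $\ind(G)=\st(v)\cup SC(v)$ and $\st(v)\cap SC(v)=K$. The second is just the definition of $K$. For the first, take any $\sigma\in\ind(G)$: either $\sigma\cup\{v\}$ is independent (so $\sigma\in\st(v)$), or some $w\in\sigma$ satisfies $vw\in E(G)$ (so $\sigma\in\st(w)\subset SC(v)$).

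The subcomplex $\st(v)$ is a cone with apex $v$, hence contractible. The main step is to show that $SC(v)$ is contractible as well, and this is where the triangle-free hypothesis at $v$ enters: any two distinct neighbours $w_1,w_2$ of $v$ are non-adjacent in $G$, so $N_G(v)$ is an independent set of $G$. I would apply the nerve lemma to the cover of $SC(v)$ by the simplices $\st(w)$ with $w\in N_G(v)$. For any $w_1,\ldots,w_j\in N_G(v)$, the intersection $\st(w_1)\cap\cdots\cap\st(w_j)$ consists of those $\sigma\in\ind(G)$ for which $\sigma\cup\{w_1,\ldots,w_j\}$ is independent; since $\{w_1,\ldots,w_j\}\subset N_G(v)$ is already independent, this intersection is nonempty and a cone with apex $w_1$, hence contractible. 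Every subset of $N_G(v)$ yields a nonempty such intersection, so the nerve of the cover is the full simplex on $N_G(v)$ and thus contractible. The nerve lemma then gives $SC(v)\htpyequiv\ast$ (non-isolation of $v$ ensures that the cover is nonempty).

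Finally, because the inclusions $K\hookrightarrow\st(v)$ and $K\hookrightarrow SC(v)$ are inclusions of subcomplexes and hence cofibrations, the decomposition $\ind(G)=\st(v)\cup SC(v)$ is a homotopy pushout along $K$. A homotopy pushout of two contractible spaces over $K$ is homotopy equivalent to $\susp K$ (collapsing each contractible piece to a point gives the double cone on $K$, i.e.\ the unreduced suspension), which completes the proof.

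The only nontrivial step is the contractibility of $SC(v)$; everything else is essentially formal. The no-triangle-at-$v$ hypothesis is used exactly once, to make $N_G(v)$ independent so that the star-cluster cover has contractible nonempty intersections on \emph{every} subset of $N_G(v)$, forcing its nerve to be a simplex.
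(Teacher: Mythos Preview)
The paper does not give its own proof of this theorem: it is quoted from Barmak's paper \cite{Bar} and used as a black box. Your argument is essentially Barmak's original one --- write $\ind(G)$ as $\st(v)\cup SC(v)$, note both pieces are contractible (the star trivially, the star cluster via the nerve lemma using that $N_G(v)$ is independent), and invoke the homotopy pushout/Mayer--Vietoris principle to get the suspension of the intersection $K$. The proof is correct; one small point worth making explicit is that for $\sigma\in\bigcap_{i}\st(w_i)$ the set $\sigma\cup\{w_1\}$ again lies in every $\st(w_i)$ precisely because $w_1$ is non-adjacent to each $w_i$, which is exactly the independence of $N_G(v)$ --- you state the conclusion but the verification of ``cone with apex $w_1$'' uses this once more.
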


In the proofs of Propositions \ref{prop1} and \ref{prop2} we are going to choose a vertex $v$ as in the theorem and identify the subcomplex $K$ with the independence complex of some graph using the following technical lemma.

\begin{lemma}
\label{technicallemma}
Let $v_1,v_2,\ldots,v_{2d}$ be a sequence of (not necessarily distinct) vertices of $G$ such that every consecutive $d+1$ vertices $v_i,v_{i+1},\ldots,v_{i+d}$ are pairwise distinct for $i=1,\ldots,d$. 

Let $L$ be the subcomplex of $\ind(G)$ consisting of those faces $\sigma$ which satisfy the condition
\begin{equation}
\label{strangecondition}
\{v_s,v_{s+1},\ldots,v_{s+d-1}\}\cap\sigma=\emptyset\ \textrm{for some}\ s\in\{1,2,\ldots,d+1\}.
\end{equation}

Then $L$ is isomorphic with the complex $\ind(H)$, where $H$ is a graph obtained from $G$ by adding the edges 
$$(v_i,v_j)\ \textrm{for all}\ 1\leq i\leq d,\ d+1\leq j\leq 2d,\ \textrm{such that}\ j-i\leq d.$$
\end{lemma}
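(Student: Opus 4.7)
The plan is to verify the claimed simplicial isomorphism as an equality $L = \ind(H)$ of subcomplexes of $\ind(G)$. Both complexes clearly sit inside $\ind(G)$ with the full vertex set $V(G)$, so the task reduces to checking that a face $\sigma \in \ind(G)$ lies in $L$ if and only if it lies in $\ind(H)$. To book-keep cleanly, I would introduce the index set $I(\sigma) = \{t \in \{1, \ldots, 2d\} : v_t \in \sigma\}$. Then $\sigma \in L$ is equivalent to some window $\{s, s+1, \ldots, s+d-1\}$ with $s \in \{1, \ldots, d+1\}$ being disjoint from $I(\sigma)$, while $\sigma \in \ind(H)$ is equivalent to $I(\sigma)$ containing no pair $(i,j)$ with $1 \leq i \leq d$, $d+1 \leq j \leq 2d$ and $j - i \leq d$.

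For the inclusion $L \subseteq \ind(H)$, I would argue by contradiction: suppose $\sigma \in L$ misses the window indexed by some $s$, yet contains a pair $v_i, v_j$ with $1 \leq i \leq d$, $d+1 \leq j \leq 2d$ and $j - i \leq d$. A case split on $s$ forces a contradiction. If $s \leq i$, then $s \leq i \leq d \leq s + d - 1$ places $v_i$ in the window. If $s \geq i + 1$, then the chain $s \leq d + 1 \leq j \leq i + d \leq s + d - 1$ places $v_j$ in the window. Either way $\sigma$ meets the window, contrary to assumption.

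For the reverse inclusion $\ind(H) \subseteq L$, I would take $\sigma \in \ind(H)$ and suppose that $\sigma$ meets every window. Applying this to windows $1$ and $d+1$ makes both $I(\sigma) \cap \{1, \ldots, d\}$ and $I(\sigma) \cap \{d+1, \ldots, 2d\}$ nonempty; let $i$ be the maximum of the first set and $j$ the minimum of the second. The window indexed by $s = i + 1 \in \{2, \ldots, d+1\}$ must contain some element of $I(\sigma)$, and by the maximality of $i$ and the minimality of $j$ this element has to be $j$, hence $j \leq i + d$. Since $j - i \leq d$ means $v_i, v_{i+1}, \ldots, v_j$ are at most $d + 1$ consecutive entries, the distinctness hypothesis yields $v_i \neq v_j$, so $v_i v_j$ is a genuine edge of $H \setminus G$, contradicting $\sigma \in \ind(H)$.

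I do not anticipate any serious obstacle: the argument is elementary interval arithmetic on $\{1, \ldots, 2d\}$. The only place where the distinctness hypothesis on the sequence $v_1, \ldots, v_{2d}$ enters is in the final line of the second direction, where it guarantees that the produced pair $(v_i, v_j)$ with $j - i \leq d$ actually defines a legitimate loopless edge of $H$; without it the argument could at worst produce a ``degenerate'' edge and fail to witness the contradiction.
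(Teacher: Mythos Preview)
Your proposal is correct and follows essentially the same approach as the paper: the paper proves $L\subseteq\ind(H)$ by the identical window/index observation, and for $\ind(H)\subseteq L$ it directly sets $s=1+\max\{1\leq i\leq d:v_i\in\sigma\}$ (with $\max\emptyset=0$), which is exactly your $s=i+1$ recast constructively rather than by contradiction. One tiny wording issue: in your reverse inclusion the element of $I(\sigma)$ lying in the window $\{i+1,\ldots,i+d\}$ need not literally be $j$, but since it lies in the second half it is $\geq j$, whence $j\leq i+d$ as you claim; the conclusion is unaffected.
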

\begin{proof}
Let $\sigma$ be a face of $\ind(G)$ which satisfies (\ref{strangecondition}) for some $s$. Since every pair $(v_i,v_j)$ with $1\leq i\leq d$, $d+1\leq j\leq 2d$ and $j-i\leq d$ has at least one of its elements in $\{v_s,v_{s+1},\ldots,v_{s+d-1}\}$, the face $\sigma$ cannot contain both elements $v_i$ and $v_j$ simultaneously. It means that $\sigma$ determines an independent set in $H$. Conversely, if $\sigma$ is a face of $\ind(H)$ then define $s$ by the formula
\begin{equation}
s=1+\max\{1\leq i\leq d: v_i\in\sigma\}
\end{equation}
(where $\max\emptyset=0$). One easily checks that $s$ and $\sigma$ satisfy (\ref{strangecondition}).
\end{proof}

\begin{figure}
\begin{tabular}{ccc}\includegraphics[scale=0.5]{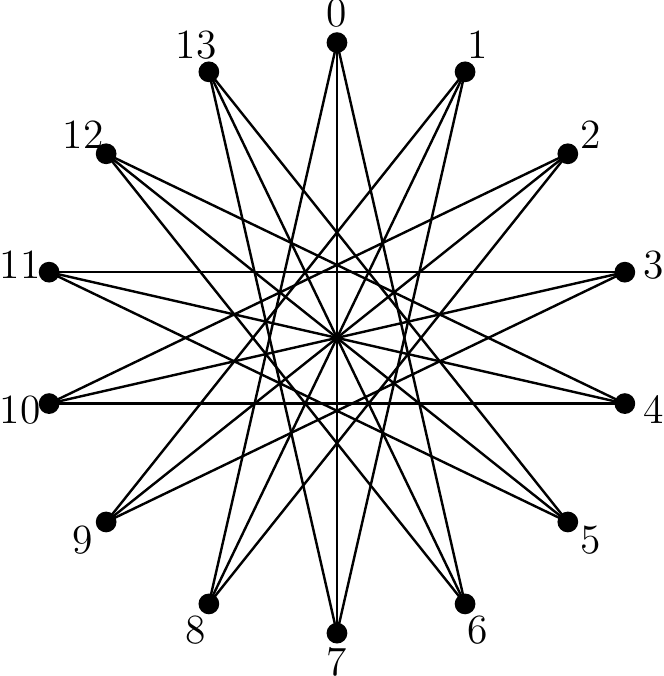} &\includegraphics[scale=0.5]{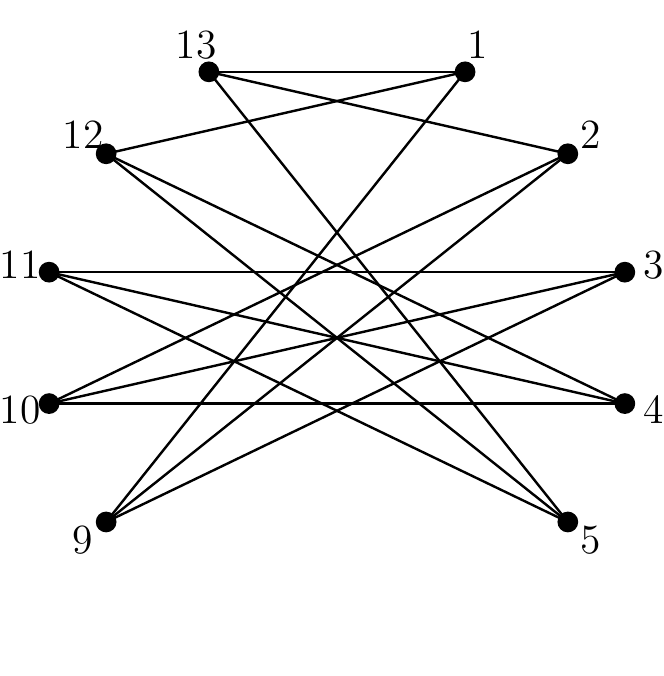} & \includegraphics[scale=0.5]{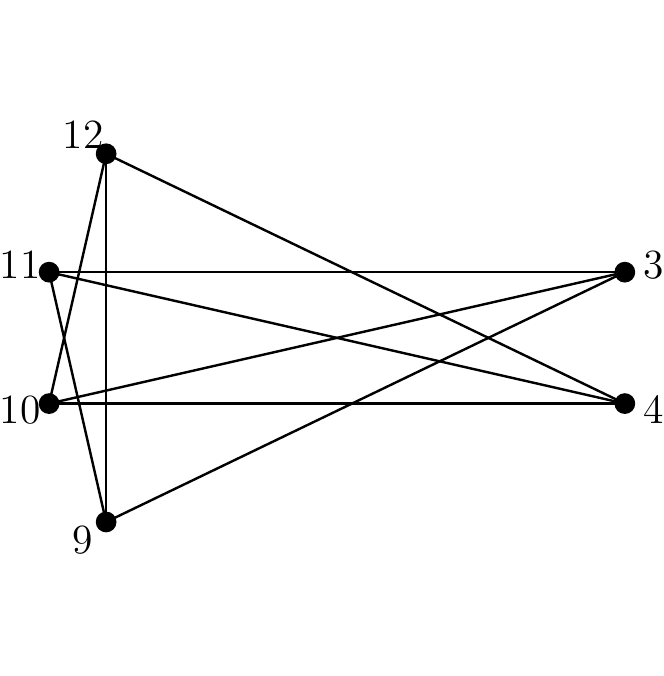}\\
a) & b) & c)\end{tabular}
\caption{a) $T_{14,3}=\compl{C_{14}^5}$. b) $S_{14,3}$. c) A graph $H$ obtained in the proof of Prop.\ref{prop2}, isomorphic to $T_{6,3}$.}
\label{fig:transf}
\end{figure}

\begin{proof}[Proof of Proposition \ref{prop1}.]
First note that the assumption $n\geq 3k-1$ is equivalent with $r\geq k-1$. Consider the vertex $0$ of $T_{n,k}$. Its neighbours in $T_{n,k}$ are the vertices of 
\begin{equation*}
\neib{0}=\{r+1,r+2,\ldots,r+k\}.
\end{equation*}
No two of those vertices are adjacent because their distances along the circle are at most $k-1\leq r$, so $0$ is not in any triangle. By Theorem \ref{barmakthm} $\ind(T_{n,k})\htpyequiv\susp K$ where $K$ is the subcomplex of $\ind(T_{n,k})$ given by
\begin{equation*}
K=\st(0)\cap\bigcup_{w\in \neib{0}} \st(w).
\end{equation*}
Note that $K$ is in fact a subcomplex of $\ind(T_{n,k}[V])$, where $V=\{1,\ldots,r\}\cup\{-1,\ldots,-r\}$ is the set of vertices non-adjacent to $0$ in $T_{n,k}$. The complex $K$ consists precisely of those independent sets $\sigma$ in $T_{n,k}[V]$ for which there exists a vertex $w\in\neib{0}$ such that $\sigma\cup\{w\}$ is an independent set in $T_{n,k}$ or, in other words, such that $\sigma\cap\neib{w}=\emptyset$. If $w=r+j$ for $1\leq j\leq k$ then (see also Fig.\ref{fig:schem}a)
\begin{equation*}
\neib{r+j}\cap V=\{-(k-j),\ldots,-1\}\cup\{1,\ldots,j-1\}.
\end{equation*}
It follows that $\sigma$ is a face of $K$ if and only if it is an independent set of $T_{n,k}[V]$ such that
\begin{equation*}
\{-(k-j),\ldots,-1,1,\ldots,j-1\}\cap\sigma=\emptyset\ \textrm{for some}\ j\in\{1,2,\ldots,k\}.
\end{equation*}
We can now apply Lemma \ref{technicallemma} with $d=k-1$, $G=T_{n,k}[V]$ and $(v_1,\ldots,v_{2d})=(-(k-1),\ldots,-1,1,\ldots,k-1)$, where all the vertices in the last sequence are distinct. The graph $H$ obtained in the lemma is $S_{n,k}$ because the additional edges are precisely $(-(k-1),1)$, $(-(k-2),1)$, $(-(k-2),2)$, etc., as in the definition of $S_{n,k}$. Therefore $\ind(T_{n,k})\htpyequiv\susp K=\susp \ind(H)=\susp\ind(S_{n,k})$.
\end{proof}

\begin{figure}
\begin{tabular}{cc}\includegraphics[scale=0.53]{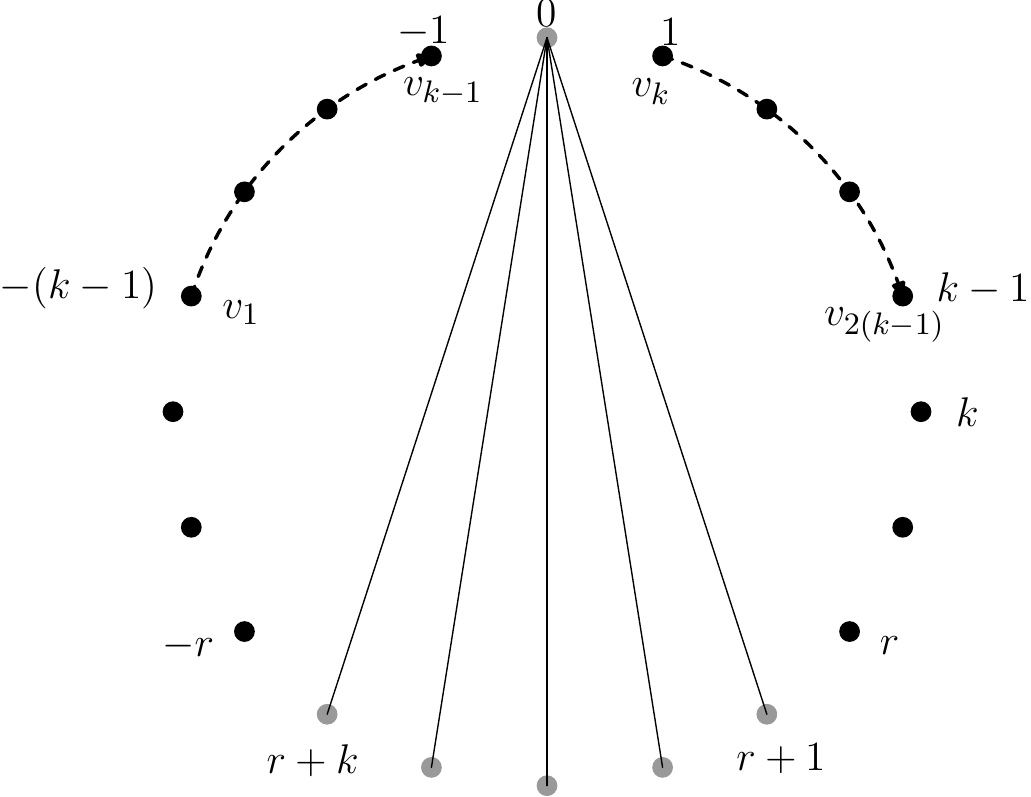} &\includegraphics[scale=0.53]{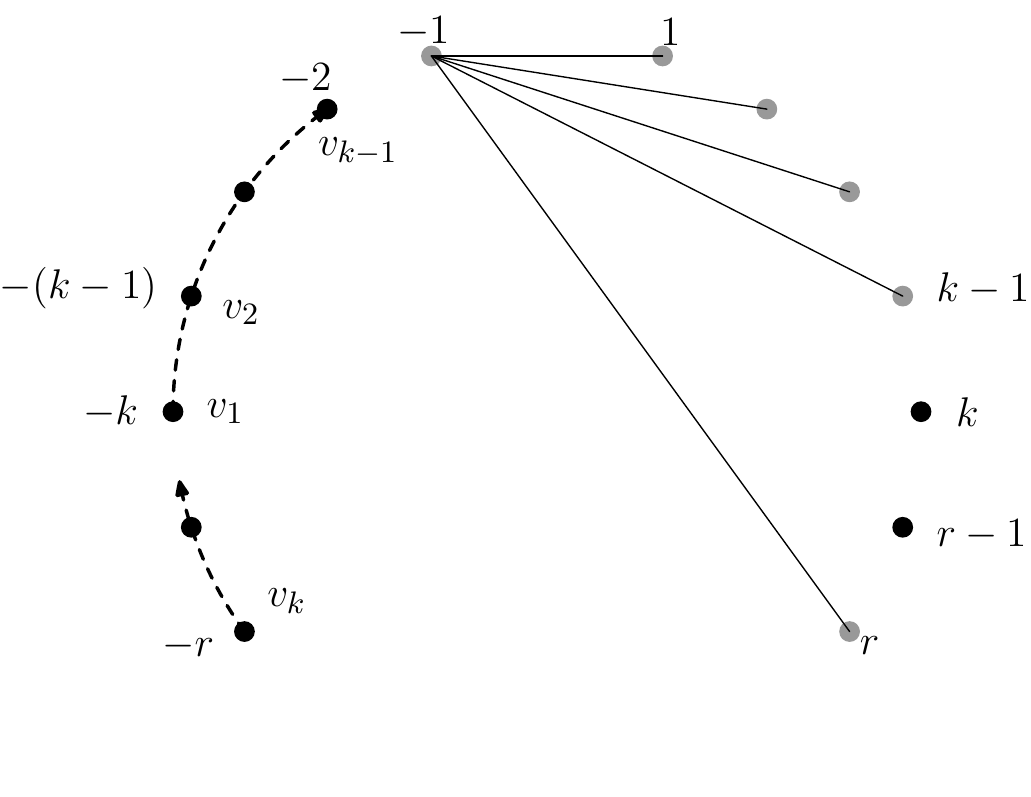}\\a) & b)\end{tabular}
\caption{Schematics for the proofs of \ref{prop1} and \ref{prop2}. The shaded vertices are in $N_G[v]$ and will be removed. Additional edges will be added between vertices marked with dashed lines.}
\label{fig:schem}
\end{figure}

\begin{proof}[Proof of Proposition \ref{prop2}.]
The assumption $n\geq 3k+3$ is equivalent with $r\geq k+1$. We apply the same strategy as before with respect to the vertex $(-1)$. Its neighbours in $S_{n,k}$ are
\begin{equation*}
\neib{-1}=\{1,\ldots,k-1\}\cup\{r\}.
\end{equation*}
No two of these vertices are adjacent, so $(-1)$ is not in any triangle. Exactly as before we obtain that $\ind(S_{n,k})\htpyequiv\susp K$ where $K$ is a subcomplex of $\ind(S_{n,k}[V])$ where
\begin{equation*}
V=\{k,\ldots,r-1\}\cup\{-r,\ldots,-2\}
\end{equation*}
(see Fig.\ref{fig:schem}b) is the set of vertices of $S_{n,k}$ non-adjacent to $(-1)$. Note that both sets in the above union are nonempty. The complex $K$ consists of those faces $\sigma$ of $\ind(S_{n,k}[V])$ for which there exists a $w\in\neib{-1}$ such that $\sigma\cap\neib{w}=\emptyset$. Note that
\begin{eqnarray*}
\neib{r}\cap V&=&\{-k,\ldots,-2\}   \\
\neib{1}\cap V&=&\{-(k-1),\ldots,-2\}\cup\{-r\}   \\
\neib{2}\cap V&=&\{-(k-2),\ldots,-2\}\cup\{-r,-r+1\}   \\
&\cdots&\\
\neib{k-1}\cap V&=&\{-r,-r+1,\ldots,-r+(k-2)\}.
\end{eqnarray*}
In the sequence
\begin{equation*}
S=(-k,\ldots,-2,-r,\ldots,-r+(k-2))
\end{equation*}
of length $2(k-1)$ every $k$ consecutive vertices are pairwise distinct. Because of the cyclic behaviour it is enough to check this for the subsequence $(-k,\ldots,-2,-r)$, where it boils down to the inequality $-r<-k$ which follows from $r\geq k+1$. 

By Lemma \ref{technicallemma} the complex $K$ is therefore homotopy equivalent to $\ind(H)$, where $H$ arises from $S_{n,k}[V]$ by adding the edges $(-(k-i),-r+j)$ for all $0\leq j \leq i \leq k-2$. It remains to identify this graph $H$ with $T_{n-2(k+1),k}$. This can be best seen geometrically (cf. Fig.\ref{fig:transf}c, Fig.\ref{fig:schem}b). The graph $H$ differs from $T_{n,k}$ by the removal of $2(k+1)$ vertices $\{-1,0,\ldots,k-1\}\cup\{r,r+1,\ldots,r+k\}$ which form two ``gaps'' of length $k+1$ each. Note that the vertices not in $S$ are not affected at all by the construction, so their neighbourhoods in $T_{n,k}$ and $H$ coincide. The vertices in $S$ are located at most $k-1$ steps from the boundaries of the gaps and for them the missing connections are provided by the extra edges in $H$, so that the neighbours of each vertex of $S$ form a contiguous block of length $k$ in the cyclic ordering of vertices in $H$ inherited from $T_{n,k}$ and we again have a circular complete graph $T_{n-2(k+1),k}$. This identification completes the proof of the proposition: $\ind(S_{n,k})\htpyequiv \susp K\homeo \susp \ind(H)\homeo \susp\ind(T_{n-2(k+1),k})$.
\end{proof}

\section{Appendix: The table of clique complexes of cycle powers}
\label{sect:tableofcyc}
The table presents the  homotopy types of some initial clique complexes $\cl(C_n^r)$. The entries below the shaded area are all $S^1$ by Proposition \ref{prop:largegirth} and the entries above it are all $\ast$ (a contractible space). The arrows show the action of the double suspension operator $\Sigma^2$ of Corollary \ref{cor:suspforcliques}.
\clearpage
\small{
\begin{table}[!ht]
\begin{adjustwidth}{-1in}{-1in}
\begin{minipage}[b]{1 \linewidth}\centering
\begin{tabular}{l|c|c|c|c|c|c|c|c|c|c|c|c}
$r=$   &$0$&  $1$    &   $2$   &   $3$   &   $4$   &   $5$   &   $6$   &  $7$   &  $8$  &  $9$  &  $10$  &  $11$ \\\hline
$C_3$  &\xx{$\x{2}{0}$} &$\ast$&$\ast$   & $\ast$  & $\ast$  &         &         &        &       &       &        &       \\\hline
$C_4$  &\xx{$\x{3}{0}$} &{$S^1$}&$\ast$&$\ast$   & $\ast$  &         &         &        &       &       &        &       \\\hline
$C_5$  &\xx{$\x{4}{0}$} &{$S^1$}&$\ast$&$\ast$   &$\ast$   &         &         &        &       &       &        &       \\\hline
$C_6$  &\xx{$\x{5}{0}$} &{$S^1$}&\xx{$S^2$}&$\ast$&$\ast$   & $\cdots$        &         &        &       &       &        &       \\\hline
$C_7$  &\xx{$\x{6}{0}$} &{$S^1$}&{$S^1$}&$\ast$& $\ast$  &         &         &        &       &       &        &       \\\hline
$C_8$  &\xx{$\x{7}{0}$} &{$S^1$}  &{$S^1$}  &\xx{$S^3$}&$\ast$   &         &         &        &       &       &        &       \\\hline
$C_9$  &\xx{$\x{8}{0}$} &\nnn{$S^1$}{1}  &{$S^1$}  &\xx{$\x{2}{2}$}&$\ast$  &         &         &        &       &       &        &       \\\hline
$C_{10}$&\xx{$\x{9}{0}$}&{$S^1$}  &{$S^1$}  &{$S^1$}  &\nnn{\xx{$S^4$}}{3}&         &         &        &       &       &        &       \\\hline
$C_{11}$&\xx{$\x{10}{0}$}&         &         &         &\xx{$S^3$}&         &         &        &       &       &        &       \\\hline
$C_{12}$&\xx{$\x{11}{0}$}&         &   $\cdots$ &         &\nnn{\xx{$\x{3}{2}$}}{2}&\xx{$S^5$}&         &        &       &       &        &       \\\hline
$C_{13}$&\xx{$\x{12}{0}$}&         &         &         &         &\xx{$S^3$}  &         &        &       &       &        &       \\\hline
$C_{14}$&\xx{$\x{13}{0}$}&         &         &         &         &\xx{$S^3$}  &\nnn{\xx{$S^6$}}{33}   &        &       &       &        &       \\\hline
$C_{15}$&\xx{$\x{14}{0}$}&         &         &         &         &\xx{$\x{4}{2}$}&\xx{$\x{2}{4}$}&      &       &       &        &       \\\hline
$C_{16}$&\xx{$\x{15}{0}$}&         &         &         &         &         &\xx{$S^3$}&\xx{$S^7$}&       &       &        &       \\\hline
$C_{17}$&\xx{$\x{16}{0}$}&         &         &         &         &         &\xx{$S^3$}&\xx{$S^5$}&       &       &        &       \\\hline
$C_{18}$&\xx{$\x{17}{0}$}&         &         &         &         &         &\xx{$\x{5}{2}$}&\xx{$S^3$}&\xx{$S^8$}&       &        &       \\\hline
$C_{19}$&\xx{$\x{18}{0}$}&         &         &         &         &         &         &\xx{$S^3$}&\xx{$S^5$}&       &        &       \\\hline
$C_{20}$&\xx{$\x{19}{0}$}&         &         &         &         &         &         &\xx{$S^3$}&\nnn{\xx{$\x{3}{4}$}}{22}&\xx{$S^9$}&       &       \\\hline
$C_{21}$&\xx{$\x{20}{0}$}&         &         &         &         &         &         &\xx{$\x{6}{2}$}&\xx{$S^3$}&\xx{$\x{2}{6}$}&     &      \\\hline
$C_{22}$&\xx{$\x{21}{0}$}&         &         &         &         &         &         &        &\xx{$S^3$}&\xx{$S^5$}&\xx{$S^{10}$}& \\\hline
$C_{23}$&\xx{$\x{22}{0}$}&         &         &         &         &         &         &        &\nnn{\xx{$S^3$}}{11}&\xx{$S^3$}&\xx{$S^{7}$} & \\\hline
$C_{24}$&\xx{$\x{23}{0}$}&         &         &         &         &         &         &        &\xx{$\x{7}{2}$}&\xx{$S^3$}&\xx{$S^5$}  &\xx{$S^{11}$}\\\hline
$C_{25}$&\xx{$\x{24}{0}$}&         &         &         &         &         &         &        &       &\xx{$S^3$}  &\xx{$\x{4}{4}$}&\xx{$S^7$}\\\hline
\end{tabular}
\end{minipage}

\begin{tikzpicture}[overlay]
\draw[->,line width=1pt] (1.east) -- (11.west);
\draw[->,line width=1pt] (2.east) -- (22.west);
\draw[->,line width=1pt] (3.east) -- (33.west);
\end{tikzpicture}\end{adjustwidth}\end{table}

\begin{table}[!ht]
\begin{tabular}{l|c|c|c|c|c|c|c}
$r=$    &  $8$    &   $9$   &   $10$  &   $11$  &   $12$  &   $13$  &  $14$   \\\hline
$C_{26}$&         &\xx{$S^3$}&\xx{$S^3$}&\xx{$S^5$}  &\xx{$S^{12}$}&         &         \\\hline
$C_{27}$&         &\xx{$\x{8}{2}$}&\xx{$S^3$}&\xx{$S^5$}&\xx{$\x{2}{8}$}&        &         \\\hline
$C_{28}$&         &         &\xx{$S^3$}&\xx{$S^3$}&\xx{$\x{3}{6}$}&\xx{$S^{13}$}&         \\\hline
$C_{29}$&         &         & \xx{$S^3$}  &\xx{ $S^3$ } &\xx{ $S^5$}   & \xx{$S^9$}   &         \\\hline
$C_{30}$&         &         &\xx{$\x{9}{2}$}&\xx{ $S^3$}  &\xx{$\x{5}{4}$}&\xx{$S^7$ }  &\xx{$S^{14}$} \\\hline
\end{tabular}
\end{table}
}


\end{document}